\newtheorem{thm}{Theorem}[section]
\newtheorem{lem}[thm]{Lemma}
\newtheorem{prop}[thm]{Proposition}
\newtheorem{cor}[thm]{Corollary}
\theoremstyle{remark}
\theoremstyle{definition}
\numberwithin{equation}{section}
\def\F{{\mathcal{F}}}   % Fatou set
\def\J{{\mathcal{J}}}   % Julia set
\def\Av{{\mathcal{A}}}  % Set of asymptotic values
\def\Cv{{\mathcal{C}}}  % Set of critical values
\def\Sv{{\mathcal{S}}}  % Set of singular values
\def\Pv{{\mathcal{P}}}  % Set of postsingular values
\def\Ev{{\mathcal{E}}}  % Set of exceptional values
\def\Ov{{\mathcal{O}}}  % Set of omitted values
\newcommand{\Crit}{\operatorname{Crit}}  % Set of critical points
\newcommand{\A}{\operatorname{A}}   % Fast escaping set and its subsets
\newcommand{\I}{\operatorname{I}}   % Escaping set
\def\C{{\mathbb{C}}}
\def\D{{\mathbb{D}}}
\def\N{{\mathbb{N}}}
\def\Circ{{\mathbb{S}}}
\newcommand{\eps}{\varepsilon}
\newcommand{\dist}{\operatorname{dist}}  % Euclidean distance
\newcommand{\e}{\operatorname{e}}
\newcommand{\M}{\operatorname{M}}  % Maximum modulus function
\newcommand{\m}{\operatorname{m}}  % Minimum modulus function
\newcommand{\cl}{\overline}
\newcommand{\abs}[1]{\left|#1\right|}
\newcommand{\set}[1]{\left\{#1\right\}}
\begin{document}
%opening
\title[Poincar\'{e} functions with spiders' webs]
{Poincar\'{e} functions with spiders' webs}
\author{Helena Mihaljevi\'{c}-Brandt}
\address{Mathematisches Seminar, Christian-Albrechts-Universit\"{a}t zu Kiel,
24118 Kiel, Germany}
\email{helenam@math.uni-kiel.de}
\author{J\"{o}rn Peter}
\thanks{The second author has been
supported by the Deutsche Forschungsgemeinschaft, Be 1508/7-1.}
\address{Mathematisches Seminar, Christian-Albrechts-Universit\"{a}t zu Kiel,
 24118 Kiel, Germany}
\email{peter@math.uni-kiel.de}

\maketitle

\begin{abstract}
For a polynomial $p$ with
a repelling fixed point $z_0$, we consider
\emph{Poincar\'{e} functions} of $p$ at $z_0$, i.e.
entire functions $L$ which satisfy
$L(0)=z_0$ and $p(L(z))=L( p'(z_0) \cdot z)$ for all $z\in\C$.
We show that if the component of the Julia set of $p$ that contains $z_0$ equals $\{z_0\}$,
then the (fast) escaping set of $L$ is a \emph{spider's web}; in particular it is connected.
More precisely, we classify all linearizers of polynomials with regards to the spider's web
structure of the set of all points which
escape faster than the iterates
of the maximum modulus function at a sufficiently large point $R$.
\end{abstract}

\section{Introduction}

Let $f$ be a transcendental entire function.
With the fundamental work of Eremenko \cite{eremenko_1},
the \emph{escaping set}
\begin{align*}
 \I(f):=\{z\in\C:f^n(z)\to\infty\text{ as }n\to\infty\}
\end{align*}
has become an intensively studied object in transcendental holomorphic dynamics.
Since then, much progress has been achieved in exploring the topological and
dynamical properties of the escaping set and some of its subsets
(for some results, see \cite{mihaljevic-brandt,rempe_1,rs_4,rs_1,rs_2,rrrs}).

Rippon and Stallard discovered that the
\emph{fast escaping set} $\A(f)$, which was
originally introduced by Bergweiler and Hinkkanen \cite{bergweiler_hinkkanen},
shares many significant features with $\I(f)$.
If we set $M(f,r):=\max_{\abs{z}=r}\abs{f(z)}$ and choose any constant $R$ such that
\begin{align}
\label{eqn_R}
\M(f,r)>r\ \text{whenever} \ r\geq R,
\end{align}
the fast escaping set of $f$ can be described as
\begin{align*}
 \A(f)=\bigcup_{l\in\N} \A_R^{-l}(f),
\end{align*}
where $\A_R^{l}(f)$ are the so-called \emph{level sets}, defined by
\begin{align*}
 \A_R^{l}(f):=\lbrace z\in\C: \vert f^{n+l}(z)\vert\geq \M^n(R), n\geq \max\{0,-l\}\rbrace.
\end{align*}
(Throughout the article $\M^n$
denotes the n-th iterate of the maximum modulus function.)

Recently, Rippon and Stallard \cite{rs_2,rs_4} introduced the concept of an
\emph{(infinite) spider's web}.
This is a connected set $E\subset\C$ with the
property that there exists a sequence of increasing simply-connected domains
$(G_n)$ whose union is all of $\C$ such that 
$\partial G_n\subset E$ for all $n$.
Functions whose (fast) escaping set is a spider's web
have some strong dynamical properties.
For instance, every such function has only bounded Fatou components
and there exists no curve to $\infty$ on which $f$ is bounded
(compare \cite{rs_2}).
In particular, the set of singular values of $f$
must be unbounded. (For precise definitions see
Section \ref{sec_prel}).

In \cite{rs_2}, various sufficient criteria are presented
such that $\I(f)$ and $\A(f)$ is a spider's web.
Primarily, this is the case whenever the set
\begin{align*}
 \A_R(f):=\A_R^0(f)
\end{align*}
 is a spider's web for any $R$ as in (\ref{eqn_R}).

In this paper, we present a large and interesting
class of functions whose escaping set is a spider's web,
namely, Poincar\'{e} functions of certain polynomials.
To make this precise, let $p$ be a polynomial with
a repelling fixed point $z_0$ (i.e. $p(z_0)=z_0$ and $\abs{p'(z_0)}>1$).
Then there exists an entire function $L$ called a
\emph{Poincar\'{e} function} or a \emph{linearizer of $p$ at $z_0$}
which satisfies
\begin{align*}
L(0)=z_0 \quad\text{and}\quad p(L(z))=L( p'(z_0) \cdot z) \;\text{for all}\; z\in\C.
\end{align*}
In the above functional equation, we can iterate the function $p$;
this already indicates that the analysis of a linearizer strongly depends on the
dynamical properties of $p$.
However, $L$ does not only depend on $p$ but also
on $z_0$ and $p'(z_0)$ which makes linearizers good candidates for
constructing functions with various interesting analytical properties
(see e.g. Section \ref{subsection_linearizers_sing_values}).
Furthermore, they are naturally good candidates for
constructing gauge functions to estimate the Hausdorff measure
of escaping and Julia sets of exponential functions (see \cite{peter}).

It was conjectured by Rempe that the escaping set of a linearizer of a
quadratic polynomial for which the critical point escapes is a spider's web.
In this article, we show that this is true; moreover, we classifiy
all linearizers of polynomials
corresponding to whether the sets $\A_R(L)$  are spiders' webs or not.

\begin{thm}
\label{thm_main}
 Let $p$ be a polynomial of degree $d\geq 2$,
let $z_0$ be a repelling fixed point of $p$ and let $L$ be a linearizer of $p$ at $z_0$.
If $R$ satisfies (\ref{eqn_R}) then
$\A_R(L)$ is a spider's web if and only if the component
of $\J(p)$ which contains $z_0$ equals $\{z_0\}$.
\end{thm}

Since polynomials for which all critical points converge to $\infty$
have totally disconnected Julia sets \cite[p.85]{fatou},
we obtain, using \cite[Theorem 1.4]{rs_2}, the following corollary which also
implies Rempe's conjecture.

\begin{cor}
\label{cor_1}
 Let $p$ be a polynomial of degree $d\geq 2$ for which all critical
points escape and let $L$ be a linearizer of $p$.
Assume that $R$ satisfies (\ref{eqn_R}).
Then each of the sets $\A_R(L)$, $\A(L)$ and $\I(L)$ is a spider's web.
In particular, this is true whenever
$p(z)=z^2 +c$ and $c$ lies outside the Mandelbrot set.
\end{cor}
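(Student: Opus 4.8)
The plan is to derive Corollary~\ref{cor_1} directly from Theorem~\ref{thm_main}, using two facts already recalled above: the theorem of Fatou \cite[p.85]{fatou} that a polynomial of degree $d\geq2$ all of whose critical points converge to $\infty$ has a totally disconnected Julia set, and the implication of \cite[Theorem~1.4]{rs_2} that if $\A_R(L)$ is a spider's web for some --- equivalently every --- $R$ as in (\ref{eqn_R}), then $\A(L)=\bigcup_{l\in\N}\A_R^{-l}(L)$ and $\I(L)\supseteq\A(L)$ are spiders' webs as well.

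First I would fix the repelling fixed point $z_0$ of $p$ at which $L$ linearizes; this is implicit in the notion of a linearizer of $p$, and such a $z_0$ indeed exists, since a polynomial of degree $d\geq2$ whose critical orbits all escape has postcritical set disjoint from $\J(p)$, hence is expanding on $\J(p)$, so all its (finitely many) fixed points in $\C$ are repelling. Since all critical points of $p$ converge to $\infty$, Fatou's theorem gives that $\J(p)$ is totally disconnected, so in particular the connected component of $\J(p)$ that contains $z_0$ is the singleton $\{z_0\}$. Theorem~\ref{thm_main} then yields that $\A_R(L)$ is a spider's web for every $R$ satisfying (\ref{eqn_R}), and \cite[Theorem~1.4]{rs_2} upgrades this to the corresponding statement for $\A(L)$ and $\I(L)$.

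For the final sentence I would specialise to $p(z)=z^2+c$ with $c$ outside the Mandelbrot set: by the very definition of the Mandelbrot set the unique critical point $0$ of $p$ then escapes to $\infty$, and both fixed points of $z^2+c$ are repelling (such a map has no non-repelling cycle), so either of them may play the role of $z_0$ and the previous paragraph applies verbatim. I do not expect a genuine obstacle in this deduction; the only points that warrant a word of justification are the verifications that ``all critical points escape'' forces both the total disconnectedness of $\J(p)$ and the existence of a repelling fixed point, so that the hypotheses of Theorem~\ref{thm_main}, and of the corollary itself, are indeed met.
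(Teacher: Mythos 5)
Your proposal is correct and follows essentially the same route the paper takes: invoke Fatou's theorem that escaping critical orbits force $\J(p)$ to be totally disconnected (so $\J_{z_0}(p)=\{z_0\}$), apply Theorem~\ref{thm_main} to get that $\A_R(L)$ is a spider's web, and then upgrade to $\A(L)$ and $\I(L)$ via \cite[Theorem~1.4]{rs_2}; the Mandelbrot specialisation is the same observation that $0$ escapes when $c$ is outside the Mandelbrot set. Your extra remarks on the existence of a repelling fixed point are harmless but not needed, since the statement already posits a linearizer $L$, which presupposes a repelling fixed point.
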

We believe that the dichotomy established in Theorem \ref{thm_main} for the sets
$\A_R(L)$ also extends to the sets $\A(L)$ and $\I(L)$. However, we were not able to
prove this. For the fast escaping set, such a result would follow
if every continuum in $\A(f)$
(or every 'loop') would be contained in some level set $\A_R^l(f)$,
which we also believe to be true (compare Question $2$ and $3$ in \cite{rs_2}).

In the proof of Theorem \ref{thm_main},
we establish spiders' webs by proving that the corresponding linearizers
grow regularly and that there
exist simple closed curves arbitrary close to $0$ on which the minimum
modulus grows fast enough.

Since the order of a linearizer of a quadratic polynomial is given by
$\log 2/\log\abs{p^{'}(z_0)}$, we obtain for any given $\rho\in (0,\infty)$
a linearizer of order $\rho$ whose escaping set is a spider's web.

\subsection*{Acknowledgements}
We would like to thank Adam Epstein
for drawing our attention to Poincar\'{e} functions
and for pointing out various interesting phenomena
related to them.
Moreover, we want to thank Walter Bergweiler, Jean-Marie Bois, Lasse Rempe,
Phil Rippon and Gwyneth Stallard for many interesting discussions.

\section{Preliminaries}
\label{sec_prel}
The complex plane, the Riemann sphere and the unit disk are denoted by
$\C$, $\widehat{\C}:=\C\cup\lbrace\infty\rbrace$ and
$\D$, respectively. The circle at $0$ with radius $r$ will be denoted
by $\Circ_r$. We write $\D_r(z)$ for the Euclidean disk of radius $r$
centred at $z$.

If not stated differently, we will assume throughout the article
that $f:\C\to\C$ is a non-constant, non-linear entire function; so $f$ is either a
polynomial of degree $\geq 2$ or
 a transcendental entire map.

Let $C\subset\C$ be a compact set. The \emph{maximum modulus} $\M(f,C)$ and
the \emph{minimum modulus} $ \m(f,C)$
of $f$ relative to $C$ are defined to be
\begin{align*}
 \M(f,C):=\max_{z\in C} \vert f(z)\vert\quad
\text{and}\quad \m(f,C):=\min_{z\in C}\vert f(z)\vert.
\end{align*}
In the case when $C=\Circ_r$ we will simplify the notation
by writing $\M(f,r)$ and $\m(f,r)$ for
$\M(f,\Circ_r)$ and $\m(f,\Circ_r)$, respectively.
Finally, recall that the \emph{order} of $f$ is defined as
\begin{align*}
 \rho(f):=\limsup_{r\to\infty}\frac{\log\log \M(f,r)}{\log r}.
\end{align*}

\subsection{Background on dynamics of entire maps}
We denote by $\Crit(f):=\lbrace z\in\C: f'(z)=0\rbrace$ the
set of \emph{critical points}, by $\Cv(f):=f(\Crit(f))$ the
set of \emph{critical values}, and by $\Av(f)$ the set
of all \emph{(finite) asymptotic values} of $f$.
The elements of $\Sv(f)=\cl{\Cv(f)\cup \Av(f)}$ are called
\emph{singular values} of $f$, and $\Sv(f)$ can be characterized as
the smallest closed subset of $\C$
such that $f:\C\setminus f^{-1}(\Sv(f))\to\C\setminus \Sv(f)$
is a covering map.
If $f$ is a polynomial then
$\Av(f)=\emptyset$ and $\Cv(f)$ is finite, so in this case, $\Sv(f)=\Cv(f)$.
The \emph{postsingular set} of $f$ is
defined to be $\Pv(f):=\cl{\bigcup_{n\geq 0} f^{n}(\Sv(f))}$.

Denote by $f^n$ the $n$-th iterate of $f$.
A point $w\in\C$ is said to be \emph{exceptional under $f$} if its backward orbit,
i.e., the set of all points $z$ which are mapped to $w$ by some $f^n$,
is finite. The set of all exceptional values of $f$ will be denoted by $\Ev(f)$.
It is well known that $\Ev(f)$ contains at most one point.
We write $\Ov(f)$ for the set of all (finite) \emph{omitted values} of $f$.

% We say that a point $z\in\C$
% is a \emph{periodic point} of $f$ if
% there exists an integer $n\geq 1$ such that $f^n(z)=z$.
% The smallest $n$
% with this property is called the \emph{period} of $z$. A periodic
% point of period one is called a \emph{fixed point}.
If $z$ is a periodic point of $f$ of period $n$,
we call $\mu(z):=(f^n)^{'}(z)$ the \emph{multiplier} of $z$.
A periodic point $z$ is called
\emph{attracting} if $0\leq \vert \mu(z)\vert<1$,
\emph{indifferent} if $\vert \mu(z)\vert=1$ and
\emph{repelling} if $\vert \mu(z)\vert>1$.
An attracting periodic point $z$ is called
\emph{superattracting} if $\mu(z)=0$.

% If $Z=\{z,f(z),\ldots ,f^{k-1}(z)\}$ is an attracting cycle of period $k$
% then we denote by $A^{*}(z)$ the \emph{immediate attracting basin}
% of $z$, i.e., the unique component containing $z$ of the set of all points $w$ for which
% the iterates $f^{nk}(w)$ converge to $z$; The immediate basin of $Z$ is then the union
% of all $A^{*}(f^i(z))$.
% % Likewise, if $Z$ is a parabolic cycle then the closure
% % $A^{*}(z)$ is required to contain $z$.
% For more details, see
% \cite[Theorem 6]{bergweiler}.

The \emph{Fatou set} $\F(f)$ of $f$ is the set of all
points that have a neighbourhood in which the iterates of $f$
form a normal family; the \emph{Julia set} $\J(f)$ is defined to be
$\C\setminus\J(f)$.
For a point $z\in\J(f)$ we denote by $\J_z(f)$ the component
of $\J(f)$ that contains $z$.
% Recall that the escaping set $\I(f)$ of $f$
% consists of all points whose orbit tends to $\infty$; the fast escaping set
% $\A(f)$ is ... .

\subsection{Poincar\'{e} functions}

Let $z_0$ be a repelling fixed point of $f$ with multiplier $\lambda$.
By the K\oe{}nigs Linearization Theorem \cite[Theorem 8.2]{milnor},
there exists a holomorphic function $l$ defined in a neighbourhood of
$0$ such that $l(0)=z_0$ and locally, $l^{-1}\circ f\circ l(z)=\lambda z$.
It was observed already by Poincar\'{e} that $l$ continues to a holomorphic
function $L$ on the entire complex plane, meaning there exists an entire
map $L$ such that $L(0)=z_0$ and
\begin{align}
\label{func_eq}
 f(L(z))=L(\lambda z)
\end{align}
for all $z\in\C$.
Every such map is called \emph{linearizer} or \emph{Poincar\'{e} function} of $f$
at $z_0$.
A linearizer is unique up to a constant. More precisely,
if $L$ satisfies (\ref{func_eq}), then so does $L_c:z\mapsto L(cz)$
for every $c\in\C^{*}$, and every solution of the equation (\ref{func_eq}) is of this form.
If $L_1$ and $L_2$ are two linearizers then we say that they have the
same \emph{normalization} if $L_1^{'}(0)= L_2^{'}(0)$.
We say that $L$ has the \emph{standard normalization} if $L^{'}(0)=1$.

\begin{prop}
\label{prop_lin_conf_conj}
 Let $f_1$ and $f_2$ be entire functions, and assume that there
exists a conformal map $\varphi(z)=az+b$ such that
\begin{align*}
 f_2=\varphi^{-1}\circ f_1\circ\varphi
\end{align*}
everywhere in $\C$. If $L_1$ and $L_2$ are linearizers of $f_1$ and $f_2$ at
$z_1$ and $z_2=\varphi^{-1}(z_1)$, respectively, with
the same normalization, then
\begin{align*}
 L_2=\varphi^{-1}\circ L_1\circ (\varphi-b),
\end{align*}
where $(\varphi-b)(z):=\varphi(z)-b =az$.
\end{prop}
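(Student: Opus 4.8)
The plan is to verify the functional equation directly for the candidate map and then invoke the uniqueness statement for linearizers. First I would check that $f_2 = \varphi^{-1}\circ f_1\circ\varphi$ implies $f_1 = \varphi\circ f_2\circ\varphi^{-1}$, and note that the multipliers at the corresponding fixed points agree: since $\varphi$ is affine, $f_2'(z_2) = f_1'(z_1) =: \lambda$ (because conjugation by a biholomorphism preserves multipliers of fixed points). So both linearizers satisfy their functional equations with the same $\lambda$.

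Next I would set $\Lambda := \varphi^{-1}\circ L_1\circ(\varphi - b)$, where $(\varphi-b)(z) = az$, and check the two defining properties of a linearizer of $f_2$ at $z_2$. For the base point: $\Lambda(0) = \varphi^{-1}(L_1(0)) = \varphi^{-1}(z_1) = z_2$. For the functional equation, I would compute
\begin{align*}
 f_2(\Lambda(z)) &= \varphi^{-1}\bigl(f_1(\varphi(\Lambda(z)))\bigr)
 = \varphi^{-1}\bigl(f_1(L_1(az))\bigr) \\
 &= \varphi^{-1}\bigl(L_1(\lambda a z)\bigr)
 = \varphi^{-1}\bigl(L_1(a\cdot\lambda z)\bigr) = \Lambda(\lambda z),
\end{align*}
using $\varphi\circ(\varphi-b) = \varphi$ applied at the point $az$ (i.e. $\varphi(az) = a(az)+b$ — wait, this needs care) and the functional equation for $L_1$. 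Here the key computational point is that $\varphi(\Lambda(z)) = \varphi(\varphi^{-1}(L_1(az))) = L_1(az)$, which is immediate, so the middle steps go through cleanly; the only subtlety is bookkeeping the affine maps.

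Having shown $\Lambda$ is a linearizer of $f_2$ at $z_2$, I would then apply the uniqueness statement recalled just before the proposition: any linearizer of $f_2$ at $z_2$ is of the form $z\mapsto \Lambda(cz)$ for some $c\in\C^*$. To pin down $c=1$ and conclude $L_2 = \Lambda$, I would use the normalization hypothesis $L_1'(0) = L_2'(0)$. Differentiating $\Lambda$ at $0$ via the chain rule gives $\Lambda'(0) = (\varphi^{-1})'(z_1)\cdot L_1'(0)\cdot a = a^{-1}\cdot L_1'(0)\cdot a = L_1'(0) = L_2'(0)$, so $\Lambda$ and $L_2$ have the same derivative at $0$; since both are linearizers of $f_2$ at $z_2$ and a linearizer is determined by its derivative at $0$, we get $L_2 = \Lambda$, as claimed.

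I do not expect a serious obstacle here: the whole argument is a direct substitution combined with the already-stated uniqueness/normalization facts for linearizers. The only place requiring attention is keeping the affine maps $\varphi(z) = az+b$, $\varphi - b$, and $\varphi^{-1}$ straight, in particular that $\varphi$ and $\varphi - b$ differ by a constant and hence $L_1\circ(\varphi-b)$ rather than $L_1\circ\varphi$ is the right object to feed into the equation so that the additive constant $b$ cancels correctly when composing with $f_1$.
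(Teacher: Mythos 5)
Your proposal is correct and follows essentially the same route as the paper: define $\widetilde L = \varphi^{-1}\circ L_1\circ(\varphi-b)$, verify it satisfies the functional equation for $f_2$ (which you do by composing $\varphi\circ\varphi^{-1}=\mathrm{id}$ and then applying the equation for $L_1$), invoke invariance of the multiplier under affine conjugacy, and match normalizations to conclude $L_2=\widetilde L$. Your momentary aside about ``$\varphi\circ(\varphi-b)=\varphi$'' is indeed false in general, but you correctly flagged and discarded it, and the computation you actually rely on ($\varphi(\Lambda(z))=L_1(az)$) is the right one.
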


\begin{proof}
Consider the function $\widetilde{L}(z):=\varphi^{-1}\circ L_1\circ (\varphi-b)(z)$.
Then $\widetilde{L}$ satisfies
\begin{eqnarray*}
 f_2\circ \widetilde{L}(z/\lambda)&=& f_2\circ \varphi^{-1}\circ L_1
\circ (\varphi-b)(z/\lambda)
=\varphi^{-1}\circ f_1\circ L_1(az/\lambda)\\
&=& \varphi^{-1} L_1(az) = \widetilde{L}(z).
\end{eqnarray*}
 Since $f_1$ and $f_2$ are conformally conjugate, the multipliers at
$z_1$ and $z_2$ coincide, hence
$\widetilde{L}$ is a linearizer of $f_2$ at $\varphi^{-1}(z_1)=z_2$.
Furthermore, $\widetilde{L}^{'}(0)=L_1^{'}(0)$, hence
$L_1$ and $\widetilde{L}$ have the same normalization, yielding
$L_2= \widetilde{L}$.
\end{proof}

In many dynamical settings,
conformal conjugacies produce no relevant dynamical consequences,
hence it is natural to ask the following: Assume that
$f_1$ and $f_2$ are as in Proposition \ref{prop_lin_conf_conj}
(so $f_1$ and $f_2$ are conformally conjugate entire functions)
and let $L_1$ be a
linearizer of $f_1$. Does there exist a linearizer $L_2$ of $f_2$ which is
conformally conjugate to $L_1$ (and hence has the same dynamics)?
In general, the answer is no. If namely such a linearizer $L_2$ would exist,
then a corresponding conjugacy, say $\psi$, would map $\Sv(L_1)$ bijectively onto
$\Sv(L_2)$, which
turns out to be equivalent to the condition
\begin{align}
\label{eqn_psi}
 \psi(\Pv(f_1))=\Pv(f_2)
\end{align}
(see Proposition \ref{prop_sing_values_lin}).
Since $\varphi$ conjugates $f_1$ and $f_2$, it already satisfies (\ref{eqn_psi}),
so in particular, the map $\psi^{-1}\circ\varphi$ is a conformal
automorphism of $\C$ that fixes the set $\Pv(f_1)$.

Now if $Z$ is an arbitrary finite subset of $\C$ with at least two elements,
then $G_Z:=\lbrace h(z)=az+b: a\in\C^{*},b\in\C,h(Z)=Z\rbrace$
is a finite group and one can easily check that the map
$G_Z\to\C^{*}, az+b\mapsto a$ is an injective group-homomorphism.
Hence $G_Z$ is isomorphic to a finite subgroup of $\C^{*}$,
which must be a cyclic group generated by a root of unity.
So every such $G_Z$ is generated by a map of the form $z\mapsto \exp(2\pi ik/n) z +b$
with coprime $k$ and $n$ and $n\leq\vert Z\vert$.
This allows to phrase necessary geometric conditions on a finite set $Z$ such that
$G_Z$ is not trivial. It is clear that such conditions are
rather strong; e.g., if $z\mapsto \exp(2\pi ik/n) z +b$
is a generator of $G_Z$ and
$p$ its (unique) fixed point in $\C$
then all elements of $Z$ must lie on
$r$ circles centred at $p$, where $r\cdot n\leq\vert Z\setminus \lbrace p\rbrace\vert$.
To give an explicit dynamical example,
one can consider the unique real
parameter $c$, for which $f(z):=z^2 +c$ has a superattracting
cycle of period three; one easily sees that
$G_{\Pv(f)}$ is trivial.

However, triviality of $G_{\Pv(f_1)}$ implies $\psi\equiv\varphi$.
So if $\varphi(z)=az+b$, then by Proposition \ref{prop_lin_conf_conj},
every linearizer of $f_2$ is of the form
\begin{align*}
 L_2(z)=\varphi^{-1}\circ L_1\circ c(\varphi - b)
\end{align*}
for some $c\in\C^{*}$, and no such map can be conformally conjugate to
$L_1$ via $\varphi$ whenever $b\neq 0$ (and $c\neq 1$).

Before the end of this paragraph let us observe that
one can iterate $f$ inside the functional equation and obtain
\begin{align}
\label{eqn_iter_fct_eq}
 f^n\circ L(z)=L\circ \lambda^n (z)
\end{align}
as an iterated version of (\ref{func_eq}),
where $\lambda^n$ denotes the function $z\mapsto\lambda^n z$.

The growth of the function $f$ and a linearizer $L$ are related in the
following sense: If $f$ is transcendental entire then
$L$ has infinite order. If $f$ is a polynomial then
$\rho(L)=\log d/\log\vert\lambda\vert$.

\subsection{Polynomial dynamics near $\infty$ and repelling fixed points}

If $p$ is a polynomial,
the Julia set of $p$ is compact
and $\I(p)$ is an open connected subset of $\F(p)$;
moreover, it is simply-connected if and only if $\J(p)$ is connected.
This property is equivalent to the relation
$\Cv(p)\cap\I(p)=\emptyset$
\cite[Lemma 9.4, Theorem 9.5]{milnor}.

Near $\infty$, the iterates of a polynomial behave in the following
simple way.
\begin{prop}
\label{prop_pol_1}
 Let $p(z)=\sum_{n=0}^d a_n z^n$ be a polynomial of degree $d\geq 2$.
Then for any $\eps>0$ there exists $R_{\eps}>0$ such that
for every $z$ with $\vert z\vert>R_{\eps}$, we have
\begin{align*}
 (1-\eps)\vert a_d\vert\cdot\vert z\vert^d\leq\vert p(z)\vert
\leq(1+\eps)\vert a_d\vert\cdot \vert z\vert^d,
\end{align*}
and $R_{\eps}\to\infty$ as $\eps\to 0$.

If $\eps$ is chosen small enough such that
$(1-\eps)\vert a_d\vert R_{\eps}^{d-1}>1$, then
\begin{align*}
((1-\eps)\vert a_d\vert)^{q_n(d)}\cdot\vert z\vert^{d^n}\leq\vert p^n(z)\vert\leq
((1+\eps)\vert a_d\vert)^{q_n(d)}\cdot \vert z\vert^{d^n}
\end{align*}
 for all $n\in\N$ and all $z\in\C$ with $\vert z\vert >R_{\eps}$,
where $q_n(z):=(z^n-1)/(z-1)=z^{n-1} + \dots +z+1$.
\end{prop}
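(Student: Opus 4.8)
The plan is to derive the first pair of inequalities by a direct estimate on the lower-order terms of $p$, and then to bootstrap them into the iterate estimates by induction on $n$.

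For the first inequalities, write $p(z)=a_d z^d\bigl(1+ \sum_{n=0}^{d-1} (a_n/a_d) z^{n-d}\bigr)$ for $z\neq 0$. The finite sum $\sum_{n=0}^{d-1} (a_n/a_d) z^{n-d}$ tends to $0$ as $\abs{z}\to\infty$ since each exponent $n-d$ is negative; hence, given $\eps>0$, there is $R_\eps>0$ so that this sum has modulus at most $\eps$ whenever $\abs{z}>R_\eps$. By the triangle inequality this gives $(1-\eps)\abs{a_d}\abs{z}^d\leq\abs{p(z)}\leq(1+\eps)\abs{a_d}\abs{z}^d$. To see that $R_\eps\to\infty$ as $\eps\to 0$: if some fixed $R_0$ worked for all small $\eps$, then on $\Circ_{R_0'}$ with $R_0'>R_0$ we would force $\abs{p(z)}=\abs{a_d}\abs{z}^d$ exactly, which fails for non-monomial $p$; more simply, one can take $R_\eps$ to be the least admissible radius, note it is non-increasing in $\eps$, and observe that if it stayed bounded the sum $\sum (a_n/a_d) z^{n-d}$ would vanish identically, contradicting $d\geq 2$ with some $a_n\neq 0$ (the monomial case $p(z)=a_d z^d$ can be treated separately, with the estimates trivially exact).

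For the iterate estimates, fix $\eps$ small enough that $(1-\eps)\abs{a_d}R_\eps^{d-1}>1$; this is possible precisely because $R_\eps\to\infty$, so that eventually $(1-\eps)\abs{a_d}R_\eps^{d-1}\geq \tfrac12\abs{a_d}R_\eps^{d-1}\to\infty$. The point of this condition is that it makes the region $\set{\abs{z}>R_\eps}$ forward invariant: if $\abs{z}>R_\eps$ then $\abs{p(z)}\geq(1-\eps)\abs{a_d}\abs{z}^d=\bigl((1-\eps)\abs{a_d}\abs{z}^{d-1}\bigr)\abs{z}>\abs{z}>R_\eps$. Hence by induction every $p^k(z)$ with $k\geq 0$ stays in $\set{\abs{z}>R_\eps}$, so the one-step bounds apply at every stage. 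Now induct on $n$: assuming $((1-\eps)\abs{a_d})^{q_n(d)}\abs{z}^{d^n}\leq\abs{p^n(z)}\leq((1+\eps)\abs{a_d})^{q_n(d)}\abs{z}^{d^n}$, apply the one-step inequality to $p^n(z)$ to get
\begin{align*}
\abs{p^{n+1}(z)}\leq(1+\eps)\abs{a_d}\abs{p^n(z)}^d\leq(1+\eps)\abs{a_d}\bigl((1+\eps)\abs{a_d}\bigr)^{d\,q_n(d)}\abs{z}^{d^{n+1}},
\end{align*}
and the exponent is $1+d\,q_n(d)=q_{n+1}(d)$ by the identity $q_{n+1}(d)=d\,q_n(d)+1$; the lower bound is identical with $1+\eps$ replaced by $1-\eps$. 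This closes the induction and proves the claim for all $z$ with $\abs{z}>R_\eps$.

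The only place requiring any care — and the mild "obstacle" — is the interplay between the two requirements on $\eps$: we need $\eps$ small for the approximation to be valid, yet we also need $(1-\eps)\abs{a_d}R_\eps^{d-1}>1$, and $R_\eps$ itself depends on $\eps$. The assertion $R_\eps\to\infty$ as $\eps\to 0$ is exactly what guarantees these are simultaneously satisfiable, which is why it is stated as part of the proposition; everything else is a routine induction using $q_{n+1}(d)=d\,q_n(d)+1$.
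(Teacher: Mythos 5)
Your proof is correct and follows essentially the same route as the paper: the paper dismisses the first pair of inequalities as ``elementary and well-known'' and then runs exactly the induction you describe, using the forward invariance of $\{|z|>R_\eps\}$ ensured by $(1-\eps)|a_d|R_\eps^{d-1}>1$ and the identity $q_{n+1}(d)=d\,q_n(d)+1$. The only additions on your side are the explicit derivation of the one-step bound via $p(z)=a_dz^d(1+\sum_{n<d}(a_n/a_d)z^{n-d})$ and the discussion of $R_\eps\to\infty$ (including the monomial caveat), which the paper leaves unstated; you should also state the trivial base case $n=1$, $q_1(d)=1$, which the paper does include.
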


\begin{proof}
The first statement is elementary and well-known.
% for a proof see e.g. \cite[Satz 1.2]{jank_volkmann}.

 Note that we have chosen $\eps$ sufficiently small
such that $\vert z\vert>R_{\eps}$ implies
$\vert p(z)\vert>R_{\eps}$. We will prove the statement inductivly. So for
$n=1$ we have $q_1(z)=1$ and the claim follows from the first part.
For the iterate $p^{n+1}(z)=p(p^n(z))$ we then obtain
\begin{eqnarray*}
\vert p(p^n(z))\vert&\leq& (1+\eps)\vert a_d\vert \vert p^n(z)\vert^d
\leq (1+\eps)\vert a_d\vert \left[((1+\eps)\vert a_d\vert)^{q_n(d)} \vert z\vert^{d^n}\right]^d\\
&=&((1+\eps)\vert a_d\vert)^{d\cdot q_n(d)+1} \vert z\vert^{d^{n+1}}
=((1+\eps)\vert a_d\vert)^{q_{n+1}(d)} \vert z\vert^{d^{n+1}}
\end{eqnarray*}
 as well as
\begin{eqnarray*}
\vert p(p^n(z))\vert&\geq& (1-\eps)\vert a_d\vert \vert p^n(z)\vert^d
\geq (1-\eps)\vert a_d\vert \left[((1-\eps)\vert a_d\vert)^{q_n(d)} \vert z\vert^{d^n}\right]^d\\
&=&((1-\eps)\vert a_d\vert)^{d\cdot q_n(d)+1} \vert z\vert^{d^{n+1}}
=((1-\eps)\vert a_d\vert)^{q_{n+1}(d)} \vert z\vert^{d^{n+1}}.
\end{eqnarray*}
\end{proof}

Near a repelling fixed point of $p$, we can make
the following statement on the escaping set $\I(p)$.

\begin{prop}
\label{prop_curve_escapingset}
 Let  $z_0$ a repelling fixed point of $p$.
For every $\delta>0$ there exists a simple closed
curve $\gamma_{\delta}\subset \D_{\delta}(z_0)\cap \I(p)$
 around $z_0$ if and only if $\J_{z_0}(p)=\{z_0\}$.
\end{prop}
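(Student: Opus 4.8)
The plan is to prove the equivalence by treating the two implications separately, using the classical local picture near a repelling fixed point together with a topological argument in the plane. For the ``if'' direction, assume $\J_{z_0}(p)=\{z_0\}$. The key fact is that $\J(p)$ is a compact set in which $z_0$ is its own connected component; by a standard result on compact sets (a component of a compact set equals the intersection of the relatively open-and-closed neighbourhoods containing it), for every $\delta>0$ there is a relatively clopen subset $K$ of $\J(p)$ with $z_0\in K\subset \D_\delta(z_0)$. Since $K$ is clopen in the compact set $\J(p)$, we have $\dist(K,\J(p)\setminus K)>0$, so we can find a bounded open set $U$ with $K\subset U\subset\cl{U}\subset\D_\delta(z_0)$ and $\partial U\cap\J(p)=\emptyset$, i.e. $\partial U\subset\F(p)$. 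Shrinking $U$ if necessary, we may take the connected component $V$ of $U$ containing $z_0$; then $\partial V\subset\partial U\subset\F(p)$, and $\partial V$ is contained in a single Fatou component, which — since $p$ is a polynomial and $\J(p)$ is disconnected here — lies in $\I(p)$ because $z_0$ is surrounded by $\partial V$ and the only unbounded Fatou component is $\I(p)$ (more carefully: the complementary component of $\partial V$ containing $\infty$ meets $\I(p)$, and $\partial V$ lies in one Fatou component, which therefore must be $\I(p)$). Now take a Jordan curve $\gamma_\delta$ in that Fatou component, separating $z_0$ from $\infty$ and contained in $\D_\delta(z_0)$; since $\I(p)$ is open and the Fatou component is a subset of it, $\gamma_\delta\subset\D_\delta(z_0)\cap\I(p)$ as required.

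For the ``only if'' direction, suppose that for every $\delta>0$ there is such a curve $\gamma_\delta\subset\D_\delta(z_0)\cap\I(p)$ around $z_0$, and assume for contradiction that $\J_{z_0}(p)\neq\{z_0\}$, so this component contains a point $w\neq z_0$. Fix $\delta$ small enough that $w\notin\D_\delta(z_0)$ and pick the corresponding $\gamma_\delta$. The Jordan curve $\gamma_\delta$ separates the plane into a bounded region containing $z_0$ and an unbounded region containing $w$ (and $\infty$). But $\J_{z_0}(p)$ is a connected subset of $\J(p)$ containing both $z_0$ and $w$, hence it must meet $\gamma_\delta$; this contradicts $\gamma_\delta\subset\I(p)\subset\F(p)$. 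Therefore $\J_{z_0}(p)=\{z_0\}$.

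The main obstacle, and the step requiring the most care, is the ``if'' direction: extracting a Jordan curve \emph{inside the escaping set} (not merely inside the Fatou set) that is arbitrarily close to $z_0$ and actually \emph{encircles} $z_0$. The subtlety is twofold. First, one must justify that the Fatou component(s) meeting the separating boundary $\partial V$ can be taken to lie in $\I(p)$: a Fatou component of a polynomial whose boundary separates a Julia point from $\infty$ and which is not a bounded Fatou component must be the basin of infinity. One clean way to handle this is to use that the escaping set $\I(p)$ is connected and contains a full neighbourhood of $\infty$, and that $\C\setminus\J_{z_0}(p)$-type separation arguments force the component ``between'' $z_0$ and the rest of $\J(p)$ to be escaping; alternatively, one may invoke B\"ottcher coordinates to see that a neighbourhood of $\infty$ lies in $\I(p)$ and then use that any continuum surrounding $z_0$ and avoiding $\J(p)$ that comes close to $z_0$ must sit in the same Fatou component as points near $\infty$, because $\{z_0\}$ being a component of $\J(p)$ means there is no Julia set ``barrier'' between them. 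Second, one should make sure the clopen decomposition of $\J(p)$ genuinely yields a \emph{round} enough region so that a Jordan curve can be inscribed in the Fatou set it bounds; this is routine once $\partial V\subset\F(p)$ is a connected subset of an open set, but it is where the planar topology has to be spelled out.
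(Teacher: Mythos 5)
Your overall architecture matches the paper's: the ``only if'' direction is the same connectedness argument (a connected $\J_{z_0}(p)$ containing a point outside $\D_\delta(z_0)$ would have to cross $\gamma_\delta\subset\F(p)$), and the ``if'' direction is the same strategy of separating $\{z_0\}$ from the rest of $\J(p)$ by a small clopen piece and then extracting a Jordan curve in $\F(p)$ encircling $z_0$. The identification of the Fatou component containing the curve as $\I(p)$ is, after some wandering, also the right reason: a curve in $\F(p)$ with a Julia point in its interior is not null-homotopic in its Fatou component, and for a polynomial every bounded Fatou component is simply connected.

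The genuine gap is precisely the step you flag as ``routine'': passing from ``there is a compact set $\partial V\subset\F(p)$ separating $z_0$ from $\J(p)\setminus\{z_0\}$'' to ``there is a Jordan curve in $\F(p)$ near $z_0$ encircling $z_0$.'' First, $\partial V$ need not be connected for a component $V$ of an open set; to get a connected separating boundary you must pass to the \emph{filled} domain $\widehat V$ (the complement of the unbounded component of $\C\setminus V$) and use that the boundary of a full bounded domain is connected. Second, even once you have a connected compactum in $\F(p)$ separating $z_0$ from $\infty$, inscribing a Jordan curve in $\F(p)$ with the same separation property, and keeping it inside $\D_\delta(z_0)$, is not elementary --- it is exactly the content of a classical planar separation theorem. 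The paper cites Whyburn's Plane Separation Theorem (a Zoretti-type result: given a component $K$ of a compact set $M\subset\widehat\C$ and $\eps>0$, there is a Jordan curve disjoint from $M$, surrounding $K$, with every point within $\eps$ of $K$), which produces the curve in one clean step. Your write-up should either invoke such a theorem explicitly or supply the non-trivial topological argument; without it, the central existence claim is unproven. The rest of your proposal is sound.
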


\begin{proof}
Let us first assume that for every $\delta>0$
there exists a simple closed curve $\gamma_{\delta}\subset \I(p)$
around $z_0$ such that $\dist(z_0,\gamma_{\delta})<\delta$. Then $\J_{z_0}(p)$
is contained in the interior of every $\gamma_{\delta}$, hence it must consist
of a single point.

% Assume now that $\J_{z_0}(p)\neq\{z_0\}$.
% If $U$ is a sufficiently small neighbourhood of $z_0$ then
% any simple curve around $z_0$ will intersect $\J_{z_0}(p)$.

If $\J_{z_0}(p)=\{z_0\}$, then for every $\delta>0$ there exist
open, non-empty disjoint sets $U_{\delta}$
and $V_{\delta}$ such that $\J(p)\subset U_{\delta}\cup V_{\delta}$,
$\J_{z_0}(p)\subset U_{\delta}$
and $\dist(z_0,U_{\delta})<\delta/2$. Furthermore, we can assume $U_{\delta}$ to be connected;
otherwise, we replace $U_{\delta}$ by the connected component of $U_{\delta}$ that
contains $z_0$, which is also an open set.
By the Plane Separation Theorem \cite[Chapter VI, Theorem 3.1]{whyburn}, there
exists a simple closed curve $S_{\delta}$ which separates $z_0$ from $\J(p)\cap V_{\delta}$
such that $S_{\delta}\cap\J(p)=\emptyset$ and every point in $\J(p)\cap U_{\delta}$ is at
distance less than $\delta/2$ from $S_{\delta}$. Hence, $\dist(S_{\delta},z_0)<\delta$ and
$S_{\delta}\subset\F(p)$. Moreover, the component of $\F(p)$ which contains $S_{\delta}$
must be $\I(p)$ since every bounded component of the Fatou set is simply-connected.
\end{proof}

\section{The set of singular values of a linearizer}
\label{subsection_linearizers_sing_values}

If not stated differently, we will
assume throughout this section
that $f$ is an entire function, $z_0$ a repelling fixed point
of $f$ and $L$ a linearizer of $f$ at $z_0$.
We begin with a simple
connection between exceptional values of
$f$ and omitted values of $L$.

\begin{prop}
\label{omitvalues}
The sets $\Ov(L)$ and $\Ev(f)\setminus\set{z_0}$ are equal.
\end{prop}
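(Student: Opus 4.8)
The plan is to prove both inclusions directly from the functional equation $f\circ L = L\circ \lambda$ and its iterated form \eqref{eqn_iter_fct_eq}, $f^n\circ L = L\circ \lambda^n$, together with the fact that $\Ev(f)$ contains at most one point and that $L$ is non-constant and entire.

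First I would show $\Ev(f)\setminus\{z_0\}\subseteq \Ov(L)$. Suppose $w\in\Ev(f)$ and $w\neq z_0$; I want to conclude that $w$ is omitted by $L$. If instead $L(a)=w$ for some $a\in\C$, then since $z_0=L(0)$ and $L$ is non-constant (hence open), the point $a$ is nonzero and lies in a region where $L$ is locally a branched cover; more usefully, $L$ has an $a$-preimage structure that forces infinitely many preimages of $w$. Concretely, apply the functional equation: from $f(L(z)) = L(\lambda z)$ we get $f(w) = f(L(a)) = L(\lambda a)$, and iterating, $f^n(w) = L(\lambda^n a)$ for all $n$. The key observation is that the full backward orbit of $w$ under $f$ contains $L(\lambda^{-n} a')$ for suitable preimages, but the cleaner route is: since $L$ is a non-constant entire function it omits at most one value (by Picard), and $z_0$ could be that value — but $w\neq z_0$, so $L$ takes the value $w$ and, being entire and non-constant, takes it at infinitely many points \emph{unless} $L-w$ has finitely many zeros. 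If $L-w$ has finitely many zeros, then since $L$ has at most order-type growth consistent with an entire function, $L(z) = w + Q(z)e^{g(z)}$ with $Q$ a polynomial; but then $f\circ L = L\circ\lambda$ severely constrains things. The slicker argument: the set $L^{-1}(w)$, if nonempty, maps under $z\mapsto \lambda^{-1}z$ into $L^{-1}(f^{-1}(w))$, and since $w\notin\{z_0\}$ is exceptional for $f$, its backward orbit under $f$ is finite, so $\bigcup_n L^{-1}(f^{-n}(w))$ is a finite union of the sets $L^{-1}(\text{finitely many points})$; meanwhile $\lambda^{-n}L^{-1}(w)$ accumulates at $0$, and $L$ is injective near $0$ (since $L'(0)=\lambda\cdot(\text{chain rule})\neq 0$, indeed $L'(0)\neq0$), giving infinitely many distinct preimages landing in a finite set — contradiction unless $L^{-1}(w)=\emptyset$. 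I would write this contradiction argument carefully: assume $w\in L(\C)$, pull back along $\lambda^{-n}$, note accumulation at $0$ where $L$ is locally injective, and derive that $f^{-n}(w)$ is infinite, contradicting $w\in\Ev(f)$.

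For the reverse inclusion $\Ov(L)\subseteq \Ev(f)\setminus\{z_0\}$, let $w\in\Ov(L)$, i.e. $L$ never takes the value $w$. First, $w\neq z_0$ since $L(0)=z_0$. It remains to show $w\in\Ev(f)$, i.e. the $f$-backward orbit of $w$ is finite. I claim in fact $f^{-1}(w)\subseteq\{w\}$ (so the backward orbit is just $\{w\}$). Indeed, if $f(\zeta)=w$ for some $\zeta$, I want $\zeta=w$, equivalently I must rule out $\zeta\notin L(\C)$ being problematic — here I use that $L$ omits \emph{only} $w$ (Picard: an entire non-constant function omits at most one value, and $L$ omits $w$, so it omits nothing else). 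Hence if $\zeta\neq w$ then $\zeta\in L(\C)$, say $\zeta=L(a)$; then $w=f(\zeta)=f(L(a))=L(\lambda a)\in L(\C)$, contradicting $w\in\Ov(L)$. Therefore $f^{-1}(w)\subseteq\{w\}$, so the backward orbit of $w$ under $f$ is contained in $\{w\}$, which is finite, giving $w\in\Ev(f)$. This completes both inclusions.

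The main obstacle is the first inclusion: making rigorous that a nonempty $L^{-1}(w)$ with $w$ exceptional (and $\neq z_0$) is impossible. The cleanest packaging uses Picard's theorem on $L$ to handle the "at most one omitted value" bookkeeping, plus the contraction $z\mapsto\lambda^{-1}z$ toward $0$ combined with local injectivity of $L$ at $0$ — I should double-check that $L'(0)\neq0$, which holds because $L'(0)$ satisfies $f'(z_0)L'(0)=\lambda L'(0)$ trivially and more to the point the standard normalization can be taken $L'(0)=1$, or in general $L'(0)\neq0$ since otherwise differentiating the functional equation at $0$ forces $L\equiv z_0$. I expect the second inclusion to be essentially immediate modulo Picard.
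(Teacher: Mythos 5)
Your proof is correct and takes essentially the same route as the paper's: both inclusions rest on Picard's theorem (so that $L$ omits at most one finite value), the iterated functional equation $f^n\circ L=L\circ\lambda^n$, and local injectivity of $L$ at $0$ (from $L'(0)\neq 0$), which lets $\lambda^{-n}a\to 0$ produce infinitely many distinct backward-orbit points. The only cosmetic difference is that for $\Ov(L)\subseteq\Ev(f)\setminus\{z_0\}$ you establish the stronger $f^{-1}(w)\subseteq\{w\}$ directly, whereas the paper argues the contrapositive (an infinite backward orbit must meet $L(\C)$); the underlying mechanism is identical, and the exploratory detour in your first inclusion (Weierstrass factorization, etc.) can simply be deleted in favor of the argument you settle on.
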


\begin{proof}
Since $L(0)=z_0$, the point $z_0$ is never an omitted value of $L$.
If $a\in\C\setminus \Ev(f)$, then the backward
orbit of $a$ has infinitely many elements.
Since $L$ omits at most one finite value, the backward orbit
of $a$ under $f$ intersects $L(\C)$, i.e.,
there exists $n\in\N$ and $w\in\C$ with $L(w)\in f^{-n}(a)$.
This means $a=f^n(L(w))=L(\lambda^nw)$, so $a\notin \Ov(L)$.
This proves $\Ov(L)\subset \Ev(f)\setminus\set{z_0}$.

Now let $a\in\C\setminus \Ov(L)$. If $a=z_0$, then we are done.
So suppose that $a\neq z_0$. Then there exists
$z\neq 0$ with $L(z)=a$. By the iterated functional equation,
$L(z/\lambda^j)\in f^{-j}(a)$. Since $z\neq 0$ and $L$
is injective in a neighborhood of $0$, the backward orbit of $a$ under $f$
has infinitely many elements.
\end{proof}

Next, we will show that
the postsingular set of $f$ and
the set of singular values $L$ coincide.
This seems to be well-known
(and to us, the main parts of the proof have been presented by A. Epstein), but we
could not find a reference, which is why we include a proof.

\begin{prop}
\label{prop_sing_values_lin}
The following relations are true:
\begin{itemize}
 \item[$(i)$] $\Cv(L)=\bigcup_{n\geq 0} f^n(\Cv(f))\setminus \Ev(f)$.
 \item[$(ii)$] $\Sv(L)=\Pv(f)$.
\end{itemize}
\end{prop}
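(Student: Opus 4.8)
The plan is to prove $(i)$ first and then derive $(ii)$ from it by taking closures, using that critical values of $L$ together with asymptotic values of $L$ determine $\Sv(L)$, while asymptotic values of $L$ contribute nothing new beyond the closure. For $(i)$, the key observation is the chain rule applied to the iterated functional equation $f^n\circ L = L\circ\lambda^n$ (equation \eqref{eqn_iter_fct_eq}). Differentiating gives $(f^n)'(L(w))\cdot L'(w) = \lambda^n L'(\lambda^n w)$. I would first show the inclusion $\Cv(L)\subseteq\bigcup_{n\ge 0}f^n(\Cv(f))\setminus\Ev(f)$: if $L'(w)=0$, pick $n$ large enough (or just $n=1$ if $w\ne 0$, after noting $L'(0)=L_c'(0)\ne 0$ so $0$ is never a critical point of a linearizer; more carefully, $L'$ vanishes somewhere only away from $0$) so that $L'(\lambda^{-n}w)\ne 0$ — this is possible because the zeros of $L'$ form a discrete set not accumulating at $0$, so for any zero $w$ of $L'$ there is a smallest $n\ge 1$ with $L'(\lambda^{-n}w)\ne 0$ (equivalently, moving toward $0$ along the orbit we leave the zero set). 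Then at $w_0:=\lambda^{-n}w$ the relation $(f^n)'(L(w_0))L'(w_0)=\lambda^n L'(w)=0$ forces $(f^n)'(L(w_0))=0$, so by the chain rule some iterate $f^j(L(w_0))$, $0\le j<n$, is a critical point of $f$, and hence $L(w)=f^n(L(w_0))=f^{n-j}\bigl(f^j(L(w_0))\bigr)\in f^{n-j}(\Cv(f))\subseteq\bigcup_{k\ge 1}f^k(\Cv(f))$. That the result avoids $\Ev(f)$ follows from Proposition \ref{omitvalues} together with the fact that a critical value of $L$ is certainly not omitted.

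For the reverse inclusion in $(i)$, take $v=f^n(c)$ with $c\in\Cv(f)$, say $c=f(c')$, $f'(c')=0$, and assume $v\notin\Ev(f)$. Since $v\notin\Ev(f)$, its backward orbit under $f$ is infinite, so as in the proof of Proposition \ref{omitvalues} there is $w\in\C$ and $m\in\N$ with $L(\lambda^{-m}w')=$ some preimage landing on $c'$; more directly, because $L$ omits at most one value and $c'\notin\Ev(f)$ (as $c'$ has $c$, hence $v\notin\Ev(f)$, in its forward orbit — one must check $\Ev(f)$ is forward invariant, which is standard), there is $u\in\C$ with $L(u)=c'$. Then $L'(u)\cdot$ (something) relates to $f'(c')=0$: applying the functional equation $f\circ L = L\circ\lambda$ and differentiating at $u$ gives $f'(L(u))L'(u)=\lambda L'(\lambda u)$, i.e. $0=f'(c')L'(u)=\lambda L'(\lambda u)$, so $\lambda u$ is a critical point of $L$, and $L(\lambda u)=f(L(u))=f(c')=c\in\Cv(L)$. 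Iterating the functional equation $n$ more times, $L(\lambda^{n+1}u)=f^n(c)=v$, and a short induction on the chain rule shows $\lambda^{n+1}u$ is again a critical point of $L$. Hence $v\in\Cv(L)$.

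Given $(i)$, statement $(ii)$ follows by taking closures and handling asymptotic values. We have $\Sv(L)=\cl{\Cv(L)\cup\Av(L)}$. From $(i)$, $\cl{\Cv(L)}\subseteq\cl{\bigcup_{n\ge 0}f^n(\Cv(f))}\subseteq\cl{\bigcup_{n\ge 0}f^n(\Sv(f))}=\Pv(f)$ (here $f$ polynomial gives $\Sv(f)=\Cv(f)$; if $f$ is transcendental one still has $\bigcup f^n(\Cv(f))\subseteq\bigcup f^n(\Sv(f))$, and the possibly-removed point of $\Ev(f)$ lies in the closure anyway since it is a limit of its own infinite backward orbit — or is handled directly). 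Conversely $\Pv(f)=\cl{\bigcup_{n\ge0}f^n(\Sv(f))}\subseteq\cl{\Cv(L)\cup\{z_0\}\cup\Ev(f)}\subseteq\Sv(L)$, using $(i)$ to recapture all of $\bigcup f^n(\Cv(f))$ except possibly the exceptional point, noting $z_0=L(0)\in\cl{L(\C)}$ and in fact $z_0$ is a limit of critical values of $L$ or can be absorbed, and that an exceptional value, being the limit of its infinite backward orbit under $f$ which is contained in $L(\C)\cup\Ev(f)$, lies in $\cl{\Sv(L)}=\Sv(L)$. Finally, for the asymptotic values: any finite asymptotic value of $L$ lies in $\cl{L(\C)}$; combined with the functional equation one shows $\Av(L)\subseteq\Pv(f)$ so it contributes nothing beyond what $(i)$ already gives. \textbf{The main obstacle} I expect is the careful bookkeeping around the exceptional value: making sure that removing $\Ev(f)$ in $(i)$ is exactly compensated when passing to closures in $(ii)$, and dealing uniformly with the transcendental case where $\Sv(f)$ may be larger than $\Cv(f)$ and where asymptotic values of $f$ enter — though since the statement is applied only to polynomials $p$, one could also simply restrict to that case where $\Sv(f)=\Cv(f)$ and $\Av(f)=\emptyset$, which streamlines everything considerably.
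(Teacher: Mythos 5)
Your proof of part $(i)$ is essentially the paper's: both directions use the chain rule applied to the (iterated) functional equation, and both invoke Proposition~\ref{omitvalues} to relate omitted values of $L$ to exceptional values of $f$. The minor index slip ($f^{n-j}(\Cv(f))$ should be $f^{n-j-1}(\Cv(f))$) is harmless since all that matters is membership in the union. The observation that $L'(\lambda u)=0$ propagates to $L'(\lambda^k u)=0$ for all $k\ge 1$ is a clean way to finish the reverse inclusion.

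Your approach to part $(ii)$, however, is genuinely different from the paper's and has a real gap. The paper does \emph{not} derive $(ii)$ from $(i)$ by taking closures; it proves $\Sv(L)=\Pv(f)$ directly. For $\Pv(f)\subseteq\Sv(L)$ it uses the covering-map identity $\Sv(L)=\Sv(f)\cup f(\Sv(L))$ (obtained by comparing the two factorizations of $f\circ L=L\circ\lambda$ as covering maps), which immediately gives $f^n(\Sv(f))\subseteq\Sv(L)$ for all $n$. For $\Sv(L)\subseteq\Pv(f)$ it takes $w\notin\Pv(f)$, pulls a small disk $D\ni w$ back under all iterates of $f$, applies the Shrinking Lemma of Lyubich--Minsky so that the pullbacks $D_n$ eventually fit inside a region where $L$ is injective, and then explicitly writes down all branches of $L^{-1}$ over $D$ as $\lambda^n\circ T\circ g_n$. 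This second step is precisely what handles the asymptotic values of $L$. In your proposal this is the missing piece: you assert ``one shows $\Av(L)\subseteq\Pv(f)$'' without an argument, but this containment is the nontrivial content of the direction $\Sv(L)\subseteq\Pv(f)$ and cannot be waved away — knowing $\Av(L)\subseteq\cl{L(\C)}$ tells you nothing, since $L(\C)$ is dense in $\C$.

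There is a second, smaller, gap in your treatment of $\Pv(f)\subseteq\Sv(L)$. You want to recapture $\cl{\bigcup_n f^n(\Cv(f))}$ from $\Cv(L)$ using $(i)$, but $(i)$ only gives $\bigcup_n f^n(\Cv(f))\setminus\Ev(f)$. If a finite exceptional value exists and is postcritical (e.g.\ $p(z)=z^d$, where $0$ is exceptional and $\Pv(p)=\{0\}$), taking closures of $\Cv(L)$ alone recovers nothing: $\Cv(L)=\emptyset$ there, and $0\in\Pv(p)$ is actually an \emph{asymptotic} value of $L$. Your attempt to absorb the exceptional point by saying it is ``the limit of its infinite backward orbit'' is backwards — an exceptional value has a \emph{finite} backward orbit by definition, so that sentence cannot be repaired as written. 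You would need a separate argument that an exceptional value of $f$ lying in $\Pv(f)$ is a singular value of $L$, which again is exactly what the paper's covering-map identity delivers for free. So the closure-from-$(i)$ route for $(ii)$ is not merely incomplete in exposition; it is missing the two ideas (the covering-map identity or an equivalent, and the Shrinking-Lemma construction of inverse branches) that make the statement true.
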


\begin{proof}
Let $w=L(z)\in \Cv(L)$, in particular $w\notin \Ov(L)$.
Since $L'(0)\neq 0$, we have $w\neq z_0$.
It follows from Proposition \ref{omitvalues} that $w\notin \Ev(f)$.
Differentiating the iterated functional equation yields
\begin{align*}
 0=(f^n)'(L(z/\lambda^n))\cdot L'(z/\lambda^n)\cdot\frac{1}{\lambda^n}.
\end{align*}
Denote by $\Crit(f)$ the set of critical points of $f$.
Since $L'(z/\lambda^n)\neq 0$ if $n$
is large enough, it follows that $L(z/\lambda^n)\in \Crit(f^n)$.
Since $\Crit(f^n)=\bigcup_{k=0}^{n-1}f^k(\Crit(f))$ by the chain rule,
there exists some $k\leq n-1$ with $L(z/\lambda^n)=f^k(y)$, where $y\in \Crit(f)$.
It follows that
\begin{align*}
 w=L(z)=f^n(L(z/\lambda^n))=f^n(f^k(y))=f^{n+k}(y),
\end{align*}
i.e., $w\in\bigcup_{n\geq 0} f^n(\Cv(f))$.

For the other inclusion, let $w\in f^n(\Cv(f))\setminus \Ev(f)$.
We want to show that there exists some $z\in L^{-1}(w)$ with $L'(z)=0$.
Again, we differentiate the iterated functional equation and obtain
\begin{align*}
 L'(z)=(f^{n+1})'(L(z/\lambda^{n+1}))\cdot
L'(z/\lambda^{n+1})\cdot\frac{1}{\lambda^{n+1}}
\end{align*}
for all $z\in\C$.
There exists some $y\in \Crit(f)$ such that $w=f^{n+1}(y)$.
Clearly, $y\notin \Ev(f)$ since $w\notin \Ev(f)$.
By Proposition \ref{omitvalues}, we have $y\notin \Ov(L)$,
so there exists $z\in\C$ with $y=L(z/\lambda^{n+1})$.
It follows by the chain rule that $L'(z)=0$,
and we have $w=f^{n+1}(y)=f^{n+1}(L(z/\lambda^{n+1}))=L(z)$,
which finishes the proof of (i).

We now prove (ii). For the composition $f\circ L$ one obtains
\begin{align*}
 \Sv(f\circ L)=S(f\vert_{f(\C)})\cup \cl{f(\Sv(L))} = \Sv(f)\cup f(\Sv(L)),
\end{align*}
since every Picard value of $f$ is also a singular value of $f$.
Let us abbreviate $S:=\Sv(f)\cup f(\Sv(L))$.
Since the composition
\begin{diagram}
\C\setminus L^{-1}(f^{-1}(S)) &\rTo^{L} &\C\setminus f^{-1}(S) &\rTo^{f} &\C\setminus S
\end{diagram}
is a covering map, it follows from (\ref{func_eq}) that
\begin{diagram}
\C\setminus \lambda^{-1}\cdot L^{-1}(f^{-1}(S)) &\rTo^{\lambda}
&\C\setminus L^{-1}(S) &\rTo^{L} &\C\setminus S
\end{diagram}
must be a covering map as well. Hence
\begin{align*}
\Sv(f)\cup f(\Sv(L))= S\supset \Sv(L\circ\lambda) = \Sv(L).
\end{align*}
The argument is commutative with respect to (\ref{func_eq}),
so we obtain the opposite inclusion, yielding the equality
$\Sv(L)=\Sv(f)\cup f(\Sv(L))$. But for a point $w\in\Sv(f)$, this implies
that $w\in\Sv(L)$, and so $f(w)\in f(\Sv(L))\subset \Sv(L)$.
By proceeding inductively, it follows for every $n\in\N$ that
$f^n(w)\in \Sv(L)$, hence $\Pv(f)\subset\Sv(L)$.

Let $w\in\C\setminus\Pv(f)$. Then there exists a disk $D\ni w$ such that
all inverse branches of all iterates of $f$ exist in $D$.
Let $v\in D$ and $z\in L^{-1}(v)$, and define $z_n:=z/\lambda^n$ and
$v_n:=L(z_n)$.
Let $g_n$ be the branch of $(f^n)^{-1}$ such that
$g_n(v)=v_n$ and let $D_n:=g_n(D)$.
By the Shrinking Lemma in \cite{lyubich_minsky}, it follows that the
diameter of the domains $D_n$ converges to $0$
(Actually, the statement in \cite{lyubich_minsky} is not phrased such that it
completely covers our setting but the proof gives what we require).
We choose a domain $U$ in which $L$ is injective.
Then for $n$ large enough, $D_n$ lies in $L(U)$.
Let $T$ be the branch of $L^{-1}$ that maps $D_n$ into $U$.
% , and define $B:=\lambda^n\circ T\circ g_n (D_n)$.
Then we have
\begin{align*}
 L\circ (\lambda^n\circ T\circ g_n)(z)
= f^n\circ L\circ \underbrace{(T\circ g_n)(z)}_{\in U}
=(f^n\circ g_n) (z)=z.
\end{align*}
Since $z$ is an arbitrarily chosen preimage of an arbitrary point in $D$,
all inverse branches of $L$ can be defined in $D$. Hence $w\in\C\setminus \Sv(L)$.
\end{proof}

If $f$ is a polynomial then $\Av(L)$
is contained in the union of attracting and parabolic periodic
cycles and the accumulation points of recurrent
critical points in $\J(f)$ \cite[Theorem 1]{drasin_okuyama}.
Depending on the location of the repelling fixed point $z_0$ relative
to $\F(f)$,
we can exclude certain attracting cycles of $f$ as asymptotic
values for $L$.
\begin{prop}
 Let $f$ be a polynomial and let $w\in\Av(L)$. If 
$w$ is an attracting periodic point of $f$,
then $z_0$ lies in the boundary of the immediate attracting basin of $w$.
\end{prop}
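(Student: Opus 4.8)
The plan is to exploit the iterated functional equation $f^{n}\circ L=L\circ\lambda^{n}$ together with the expansion $|\lambda|>1$. The underlying mechanism is elementary: if $z^{*}$ lies on an asymptotic path of $L$ over $w$ — a curve tending to $\infty$ along which $L\to w$ — then $z^{*}/\lambda^{n}\to 0$, so $L(z^{*}/\lambda^{n})\to L(0)=z_{0}$, while at the same time $f^{n}(L(z^{*}/\lambda^{n}))=L(z^{*})$ can be made as close to $w$ as we wish. Since $z_{0}$ is repelling we have $z_{0}\neq w$, and since $w$ is attracting of some period $p$ I would first replace $f$ by its iterate $f^{p}$: because $f^{p}\circ L=L\circ\lambda^{p}$ and $z_{0}$ is a repelling fixed point of $f^{p}$ with multiplier $\lambda^{p}$, the map $L$ is a linearizer of $f^{p}$ at $z_{0}$; moreover $w$ is an attracting \emph{fixed} point of $f^{p}$, and if $A$ denotes the component of $\F(f)=\F(f^{p})$ containing $w$ then $\partial A$ is contained in the boundary of the immediate attracting basin of $w$ for $f$ (whether the latter is taken to be $A$ or the union of the $p$ cyclic components, since $\partial A\subset\J(f)$). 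Hence it suffices to prove $z_{0}\in\partial A$ under the assumption $p=1$, in which case $f(A)=A$.

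Next I would fix $\delta>0$ so small that $\overline{\D_{\delta}(w)}\subset A$, that $f(\D_{\delta}(w))\subset\D_{\delta}(w)$ (possible because $w$ is an attracting fixed point), and that $z_{0}\notin\overline{\D_{\delta}(w)}$. As a first step — and as a fallback if the refinement below turns out awkward — one obtains immediately that $z_{0}$ lies in the boundary of the \emph{full} basin $A(w)$: pick $z^{*}$ on an asymptotic path over $w$ with $L(z^{*})\in\D_{\delta}(w)$; then $f^{n}(L(z^{*}/\lambda^{n}))=L(z^{*})\in\D_{\delta}(w)$, and since $f(\D_{\delta}(w))\subset\D_{\delta}(w)$ the forward orbit of $L(z^{*}/\lambda^{n})$ stays in $\D_{\delta}(w)$ from step $n$ on and converges to $w$, so $L(z^{*}/\lambda^{n})\in A(w)$ for every $n$; letting $n\to\infty$ gives $z_{0}=\lim_{n}L(z^{*}/\lambda^{n})\in\overline{A(w)}$, whence $z_{0}\in\partial A(w)$ because $z_{0}\in\J(f)$.

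To upgrade this to the \emph{immediate} basin $A$, I would keep the pulled–back points inside one fixed asymptotic tract. Let $\mathcal{V}$ be the family of unbounded components of $L^{-1}(\D_{\delta}(w))$; it is nonempty because $w\in\Av(L)$ (a tail of an asymptotic path over $w$ lies in such a component), and it is finite — which I would argue from the finiteness of the order $\rho(L)=\log d/\log|\lambda|$, forcing finitely many asymptotic tracts over the single value $w$. Since $L(\lambda V)=f(L(V))\subset f(\D_{\delta}(w))\subset\D_{\delta}(w)$, each $V\in\mathcal{V}$ has $\lambda V$ contained in a unique member $\sigma(V)\in\mathcal{V}$. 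As $\mathcal{V}$ is finite, $\sigma$ has a periodic cycle, so $\lambda^{q}V\subset V$ for some $V\in\mathcal{V}$ and some $q\geq 1$; replacing $f$ once more by the iterate $f^{q}$ (and reading $L$ as its linearizer, exactly as before), I may assume $\lambda V\subset V$, so that $V\subset\lambda^{-1}V\subset\lambda^{-2}V\subset\cdots$. Then $L(\lambda^{-n}V)\subset A$ for every $n\geq 0$: this is clear for $n=0$, and inductively $f(L(\lambda^{-n}V))=L(\lambda^{-(n-1)}V)\subset A$ forces $L(\lambda^{-n}V)\subset f^{-1}(A)\subset\F(f)$, so the connected set $L(\lambda^{-n}V)$ lies in a single Fatou component; since $V\subset\lambda^{-n}V$ we have $\emptyset\neq L(V)\subset L(\lambda^{-n}V)\cap A$, so that component is $A$. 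Consequently $\bigcup_{n\geq 0}\lambda^{-n}V$ is connected with $L$–image in $A$; choosing any $z^{*}\in V$ we get $z^{*}/\lambda^{n}\in\lambda^{-n}V$ with $z^{*}/\lambda^{n}\to 0$, hence $z_{0}=L(0)\in\overline{L\big(\bigcup_{n}\lambda^{-n}V\big)}\subset\overline{A}$, and once more $z_{0}\in\J(f)$ yields $z_{0}\in\partial A$.

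The routine ingredients here are the choice of $\delta$ (attractivity of the fixed point), the complete invariance of $\F(f)$, and the reductions to iterates via the functional equation. The step I expect to be the main obstacle is the passage to a tract $V$ with $\lambda V\subset V$: it rests on the finiteness of $\mathcal{V}$ — only finitely many asymptotic tracts over $w$, which should follow from $L$ having finite order but needs a careful reference — and it is precisely the device by which "staying in the immediate basin" is secured, through the nesting $V\subset\lambda^{-n}V$ together with the connectedness of $L(\lambda^{-n}V)$. If establishing the finiteness of $\mathcal{V}$ proved delicate, the second paragraph still delivers the weaker conclusion $z_{0}\in\partial A(w)$ for the full basin.
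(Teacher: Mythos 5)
Your approach is genuinely different from the paper's, and the part that is solid --- the second paragraph, establishing $z_0\in\partial A(w)$ for the \emph{full} basin --- is a clean elementary argument. But the proposition concerns the \emph{immediate} basin, and your upgrade in the third paragraph has a real gap exactly where you flag it: the finiteness of the family $\mathcal{V}$ of unbounded components of $L^{-1}(\D_{\delta}(w))$. This does not follow from the finite order of $L$ in any way you can simply cite. The Denjoy--Carleman--Ahlfors theorem bounds the number of \emph{direct singularities} of $L$, not the number of unbounded components of $\{|L-w|<\delta\}$ for a fixed $\delta$; an unbounded component need not encircle a singularity at all, since on its far part $|L-w|$ could remain pinned between some $\delta'>0$ and $\delta$. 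Moreover your $\sigma$ acts on all of $\mathcal{V}$, including components carrying indirect singularities or no singularity, so even a DCA bound on direct tracts over $w$ would not by itself force $\sigma$ to have a periodic orbit. Without a periodic $V$ you lose the nesting $V\subset\lambda^{-n}V$, and with it the entire mechanism that confines $L(\lambda^{-n}V)$ to the single component $A$.

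The paper sidesteps all of this by invoking \cite[Theorem~1]{drasin_okuyama}: it pulls back the asymptotic path to $\gamma_{nk}=\lambda^{-nk}\gamma$, observes that the limit of $L$ along $\gamma_{nk}$ lies in $f^{-nk}(w)$ \emph{and} is an asymptotic value of $L$, and then uses Drasin--Okuyama to see that this limit must be an attracting periodic point, hence equals $w$ because $f^{nk}$ fixes the cycle pointwise. Then $L(\gamma_{nk})$ is a connected subset of $\F(f)$ whose tail lands in $A^{*}(w)$, so it lies entirely in $A^{*}(w)$, and its initial point tends to $z_0$ as $n\to\infty$. Your tract-dynamics idea is appealing precisely because it appears to avoid Drasin--Okuyama, but to make it rigorous you would need to restrict $\sigma$ to a finite, $\sigma$-invariant subfamily of tracts over $w$ and prove that such a subfamily exists and contains the tract of your asymptotic path --- and establishing that would likely recycle the content of \cite{drasin_okuyama}. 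As written, the third paragraph does not close the argument, and the second paragraph proves a strictly weaker statement than the proposition asserts.
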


\begin{proof}
 Let $w$ be an attracting periodic point of $f$ of period $k$
and assume that $w$
is an asymptotic value of $L$. Then there exists a path $\gamma$
to $\infty$ for which
$\lim_{t\to\infty} L(\gamma(t))=w$. Since $w\in\F(f)$ and $\F(f)$ is
open, we can assume that $L(\gamma)\subset\F(f)$.
It follows from (\ref{eqn_iter_fct_eq})
that every path $\gamma_n(t):=\lambda^{-n}\cdot\gamma(t)$
is again an asymptotic path for $L$. Moreover,
the limit of $L$ along $\gamma_{nk}$ is contained in $f^{-nk}(w)$.
On the other hand, every such limit point must
lie in the set of attracting periodic points
\cite[Theorem 1]{drasin_okuyama},
hence it follows that
$\lim_{t\to\infty} L(\gamma_{nk}(t))=w$.
Furthermore, for every $\eps>0$ there exists $N_{\eps}\in\N$
such that for all $n\geq N_{\eps}$, the curve $L(\gamma_{nk})$
intersects $\D_{\eps}(z_0)$. Hence $z_0\in\partial A^{*}(w)$.
\end{proof}

Recall that a point $z\in\J(f)$ is called a
\emph{buried point} if it does not belong to the boundary
of any Fatou component (other that $\I(f)$).
\begin{cor}
If $f$ is a polynomial and $z_0$ is a buried point (of $f$) 
then $L$ has no asymptotic values.
\end{cor}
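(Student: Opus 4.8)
The plan is to combine the Drasin--Okuyama classification of $\Av(L)$ with the preceding Proposition and, for the remaining cases, with a self-similarity argument for the tracts of $L$. Recall that, by \cite[Theorem 1]{drasin_okuyama}, every $w\in\Av(L)$ lies on an attracting or a parabolic cycle of $f$, or is an accumulation point of a recurrent critical orbit in $\J(f)$; and that, since $f$ is a polynomial, the immediate basin of any attracting or parabolic periodic point is a bounded Fatou component, i.e.\ a component of $\F(f)$ distinct from $\I(f)$, whose boundary therefore contains no buried point.

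The attracting case is then immediate from the Proposition above: if $w\in\Av(L)$ were an attracting periodic point, then $z_0\in\partial A^{*}(w)$ with $A^{*}(w)$ a bounded Fatou component, contradicting that $z_0$ is buried. One is left with asymptotic values $w\in\J(f)$, and the first step would be to show that such a $w$ is necessarily periodic. Being of finite order $\rho(L)=\log d/\log\abs{\lambda}$, the function $L$ has only finitely many asymptotic values. Given an asymptotic path $\gamma$ with $L(\gamma(t))\to w$, the curves $\gamma_n:=\lambda^{-n}\gamma$ are again asymptotic paths, and the iterated functional equation gives $f^n(L(\gamma_n(t)))=L(\gamma(t))\to w$; hence $L(\gamma_n(t))$ accumulates, as $t\to\infty$, in the finite set $f^{-n}(w)$ and so converges to some $a_n$ with $a_n\in f^{-n}(w)\cap\Av(L)$, $f(a_{n+1})=a_n$ and $a_0=w$. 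Since a backward orbit contained in the finite set $\Av(L)$ is eventually periodic, $w$ lies on a periodic cycle $C$, all $a_n$ lie on $C$, and $a_{np}=w$ whenever $np$ is a multiple of the period $p$ of $C$.

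If $C$ is a parabolic cycle, I would rerun the argument of the Proposition above with the immediate parabolic basin $\Omega$ of $w$ in place of $A^{*}(w)$. The one new point is that $L(\gamma_{np})$ eventually enters $\Omega$: this follows from the local flower structure at the parabolic point, as a curve accumulating at $w$ whose image under a high iterate of $f^p$ again accumulates at $w$ must enter one of the attracting petals of $\Omega$. Then $L(\gamma_{np})$ is a connected subset of $\Omega$ meeting $\D_{\eps}(z_0)$ for all large $n$, so $z_0\in\partial\Omega$, again contradicting that $z_0$ is buried.

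Finally, suppose $C$ is a repelling or a Cremer cycle. For a repelling cycle I would argue by self-similarity of the tracts of $L$. Fix a small $\eps$ with $\eps<\abs{z_0-w}$ and a large multiple $n_0p$, and let $T$ be the unbounded component of $L^{-1}(\D_{\eps}(w))$ containing the tail of $\gamma_{n_0p}$. Since $\lambda^{-p}T$ is connected, since $f^p(L(z))\in\D_{\eps}(w)$ for $z\in\lambda^{-p}T$, and since the tail of $\lambda^{-p}\gamma_{n_0p}=\gamma_{(n_0+1)p}$ has $L$-limit $a_{(n_0+1)p}=w$, the image $L(\lambda^{-p}T)$ lies in the component $U_1$ of $f^{-p}(\D_{\eps}(w))$ that contains $w$; iterating, $L(\lambda^{-mp}T)\subset U_m$, where $U_m$ is the component of $f^{-mp}(\D_{\eps}(w))$ at $w$. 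As $L(T)\subset\D_{\eps}(w)$ but $L(0)=z_0\notin\D_{\eps}(w)$, we have $\dist(0,T)>0$, so $\lambda^{-mp}T$ contains points whose modulus tends to $0$; on such a point $z'$ one has simultaneously $L(z')\to z_0$ and $L(z')\in U_m$, and since for a repelling cycle $U_m$ shrinks to $\{w\}$ (the inverse branch of $f^p$ at $w$ is a contraction), this forces $z_0=w$, hence $C=\{z_0\}$ and $z_0\in\Av(L)$. The two situations surviving this analysis---that $z_0$ is itself an asymptotic value of $L$, and that $C$ is a Cremer cycle (for which $U_m$ no longer shrinks and one needs the finer local dynamics near the Cremer point)---are the ones I expect to be the main obstacle; for the first, one should exploit that $L$ is injective near $0$, so that the component of $L^{-1}(\D_{\eps}(z_0))$ containing $0$ is a neighbourhood of $0$ distinct from the tract of $\gamma$. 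Assembling all cases gives $\Av(L)=\emptyset$.
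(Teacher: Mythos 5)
Your strategy---reduce to periodic asymptotic values and then dispose of attracting, parabolic, repelling and Cremer cycles case by case---is more elaborate than what the paper appears to have in mind: the Corollary is stated without a proof, as an immediate consequence of the preceding Proposition and the Drasin--Okuyama description of $\Av(L)$. Your observation that an asymptotic value $w\in\J(f)$ must be \emph{periodic} is correct and a genuine sharpening: since $L(\gamma_n(t))$ is connected and $f^n(L(\gamma_n(t)))\to w$, its accumulation set on the sphere is a connected subset of $f^{-n}(w)\cup\{\infty\}$ and cannot contain $\infty$, so $L(\gamma_n(t))$ converges to a point $a_n\in f^{-n}(w)\cap\Av(L)$; as $\Av(L)$ is finite (by the Denjoy--Carleman--Ahlfors theorem and $\rho(L)<\infty$), the backward orbit $(a_n)$ is eventually cyclic and one checks that in fact all $a_n$ lie on the cycle of $w$. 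The parabolic case by analogy with the Proposition is sound, modulo the point you flag about $L(\gamma_{np})$ eventually entering an attracting petal, which does need the Leau--Fatou flower.

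But, as you say yourself, the proposal is incomplete, and the two remaining cases are genuine gaps, not routine loose ends. The Cremer case is the serious one: the key step in your repelling argument is that the component $U_m$ of $f^{-mp}(\D_{\eps}(w))$ containing $w$ shrinks to $\{w\}$, and this is exactly what fails at a Cremer point, where the local inverse of $f^p$ is not a contraction and the small-cycle/hedgehog phenomena prevent any such collapse; no variant of the shrinking-tract argument closes this case without new input. The sub-case $z_0=w$ is also not resolved by the hint you give: that the component of $L^{-1}(\D_{\eps}(z_0))$ containing $0$ is a bounded neighbourhood of $0$ distinct from the tract $T$ is true but produces no contradiction, since ``$L(\lambda^{-mp}T)\subset U_m\to\{z_0\}$'' and ``$L\to z_0$ near $0$'' are perfectly compatible statements. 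As the Corollary asserts $\Av(L)=\emptyset$, both cases must actually be excluded; what you have shown so far is only that any asymptotic value of $L$ is either $z_0$ itself or lies on a Cremer cycle of $f$, which falls short of the claim.
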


Linearizers can be very useful to construct entire or meromorphic functions
whose set of singular values satisfies certain conditions.
For instance, in \cite{mihaljevic-brandt}, there was given an example of an
entire function of finite order with no asymptotic values and
only finitely many critical values such that the ramification degree
on its Julia set was unbounded; the constructed function was
 a linearizer of a certain hyperbolic
quadratic polynomial. Here we want to show another interesting
example that can be constructed using linearizers, in this case
of a transcendental entire function $f$.

Let $f(z):=\mu\exp(z)$ where $\mu\in\C$ is chosen
such that $\bigcup_{n\geq 0} f^n(0)$ is dense in $\C$.
The existence of such parameters is well-known.
By \cite[Theorem 2]{langley_zheng}, the function $f$ has infinitely
many fixed points. Since $\Sv(f)=\lbrace 0\rbrace$,
at most one of them is non-repelling
\cite[Theorem 7]{bergweiler},
 so we can pick a repelling fixed point $z_0$ of
$f$. Let $L$ be a linearizer of $f$ at $z_0$. It follows
from the functional equation that $0$ is an omitted
value of $L$. By Proposition \ref{prop_sing_values_lin}, every
point $w_n:=f^n(0)$ is an asymptotic value of $L$. It
is also not hard to check that $L$ has a direct
singularity lying over each of the points $w_n$.
(For a clarification of terminology, see e.g. \cite{drasin_okuyama};
our last claim also follows from \cite[Theorem 1.4]{drasin_okuyama}, which is formulated
 for linearizers of rational maps only, but extends to
linearizers of transcendental entire maps with the same proof.)
Hence $L$ is a map for which the set of projections of direct singularities
(or direct asymptotic values) is dense in $\C$. This is optimal, since
by a theorem of Heins \cite{heins}, the set of
projections of direct singularities is always countable.

\section{Maximum and minimum modulus estimates}

In the remaining part of the article we
prove Theorem \ref{thm_main}.
From now on, we consider an arbitrary but fixed polynomial $p$ of degree $d\geq 2$,
hence $p$ can be written as
\begin{align*}
 p(z)=\sum_{i=0} ^d a_i z^i = a_0 + a_1 z\dots +a_d z^d , \ \;a_d\neq 0.
\end{align*}
For every $\eps>0$ we pick a constant $R_{\eps}\geq 1$
for which the conclusion of
Proposition \ref{prop_pol_1} is satisfied, and such that
$\eps_1<\eps_2$ implies $R_{\eps_1}>R_{\eps_2}$.
We assume that $p$ has a repelling fixed point $z_0$ with
multiplier $\lambda$, and we denote by $L$ a linearizer of $p$ at $z_0$.
We also pick a constant $R_L\geq 1$ such that
$\M(L,s)>s$ for all $s\geq R_L$.

\begin{lem}[Regularity of growth]
\label{lem_reg_growth_1}
 Let $\eps>0$, $r> \max\{R_{\eps},R_L\}$ and define
$k_{\eps}:=\log((1-\eps)\vert a_d\vert)$ and $K_{\eps}:=\log((1+\eps)\vert a_d\vert)$.
Then
\begin{align*}
 \prod_{i=0} ^{n-1}\left(\! d + \frac{\log k_{\eps}}{\log \M(L,\vert\lambda\vert^i r)} \right)
\!\leq\! \frac{\log\M(L,\vert\lambda\vert^n r)}{\log\M(L, r)}
\!\leq \!\prod_{i=0} ^{n-1}\!
\left(\! d + \frac{\log K_{\eps}}{\log \M(L,\vert\lambda\vert^i r)}\right)
\end{align*}
holds for all $n\in\N$.
\end{lem}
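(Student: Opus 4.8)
The plan is to reduce the statement to a single-step comparison for $\M(L,\cdot)$ along the geometric progression $r,\vert\lambda\vert r,\vert\lambda\vert^2 r,\dots$ and then iterate it multiplicatively. Writing $\phi(s):=\log\M(L,s)$, the whole argument will hinge on the one-step estimate
\begin{align}
\label{eqn_onestep}
(1-\eps)\vert a_d\vert\cdot\M(L,s)^d\;\leq\;\M(L,\vert\lambda\vert s)\;\leq\;(1+\eps)\vert a_d\vert\cdot\M(L,s)^d,
\end{align}
which I claim holds for every $s>\max\{R_\eps,R_L\}$. To prove \eqref{eqn_onestep} I would start from the functional equation $p(L(z))=L(\lambda z)$, which gives $\M(L,\vert\lambda\vert s)=\M(p\circ L,s)$ (as $z$ runs over $\Circ_s$, the point $\lambda z$ runs over $\Circ_{\vert\lambda\vert s}$). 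The maximum modulus principle applied to the entire map $p\circ L$ yields $\M(p\circ L,s)\leq\M\bigl(p,\M(L,s)\bigr)$, while evaluating $p\circ L$ at a point $z^{\ast}\in\Circ_s$ where $\vert L\vert$ attains its maximum on $\Circ_s$ yields $\M(p\circ L,s)\geq\bigl\vert p\bigl(L(z^{\ast})\bigr)\bigr\vert$ with $\vert L(z^{\ast})\vert=\M(L,s)$. Now $s>R_L$ forces $\M(L,s)>s$, and together with $s>R_\eps$ this shows that the circle $\Circ_{\M(L,s)}$ lies in the range where Proposition~\ref{prop_pol_1} applies; feeding its two-sided bound for $\vert p\vert$ into the two displayed inequalities gives \eqref{eqn_onestep}.

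Next I would take logarithms in \eqref{eqn_onestep}, obtaining $d\,\phi(s)+\log((1-\eps)\vert a_d\vert)\leq\phi(\vert\lambda\vert s)\leq d\,\phi(s)+\log((1+\eps)\vert a_d\vert)$, and divide through by $\phi(s)$, which is positive since $\M(L,s)>s\geq 1$. This yields the one-step ratio bound
\begin{align*}
d+\frac{\log((1-\eps)\vert a_d\vert)}{\phi(s)}\;\leq\;\frac{\phi(\vert\lambda\vert s)}{\phi(s)}\;\leq\;d+\frac{\log((1+\eps)\vert a_d\vert)}{\phi(s)}.
\end{align*}
Because $\vert\lambda\vert>1$, each of the radii $s=r,\vert\lambda\vert r,\dots,\vert\lambda\vert^{n-1}r$ still exceeds $\max\{R_\eps,R_L\}$, so this bound applies to each of them; multiplying the resulting $n$ inequalities and using the telescoping identity $\phi(\vert\lambda\vert^n r)/\phi(r)=\prod_{i=0}^{n-1}\phi(\vert\lambda\vert^{i+1}r)/\phi(\vert\lambda\vert^i r)$ then produces exactly the two products in the statement (with $k_\eps$ and $K_\eps$ playing the roles of $(1-\eps)\vert a_d\vert$ and $(1+\eps)\vert a_d\vert$).

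The one step where I expect to need some care is the passage from the per-step inequalities to their product: deducing $\prod a_i\leq\prod c_i\leq\prod b_i$ from $a_i\leq c_i\leq b_i$ with $c_i=\phi(\vert\lambda\vert^{i+1}r)/\phi(\vert\lambda\vert^i r)>0$ requires the lower-bound factors $a_i=d+\log((1-\eps)\vert a_d\vert)/\phi(\vert\lambda\vert^i r)$ to be nonnegative. If $(1-\eps)\vert a_d\vert\geq 1$ this is immediate ($a_i\geq d$); otherwise one uses the normalization $(1-\eps)\vert a_d\vert R_\eps^{d-1}>1$ built into Proposition~\ref{prop_pol_1}, which forces $\log R_\eps>0$ and $(d-1)\log R_\eps>-\log((1-\eps)\vert a_d\vert)$, hence $d\,\phi(\vert\lambda\vert^i r)\geq d\,\phi(r)>d\log r>d\log R_\eps>\bigl\vert\log((1-\eps)\vert a_d\vert)\bigr\vert$ and therefore $a_i>0$. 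Beyond this positivity check and the bookkeeping of which radii are large enough for Proposition~\ref{prop_pol_1}, the argument is routine.
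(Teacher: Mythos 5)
Your proof is correct and follows essentially the same route as the paper's: use the functional equation to write $\M(L,|\lambda|r)=\M(p\circ L,r)$, sandwich this between $|p(\tilde w)|$ at the maximum point and $\M(p,\M(L,r))$, apply Proposition~\ref{prop_pol_1}, take logarithms, divide by $\log\M(L,r)$, and telescope. The only point where you go beyond the paper is the explicit verification that the lower-bound factors $d+k_\eps/\log\M(L,|\lambda|^i r)$ are nonnegative before multiplying the chain of inequalities; the paper passes over this silently, and your check via the normalization $(1-\eps)|a_d|R_\eps^{d-1}>1$ is a correct (if minor) tightening of the argument.
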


\begin{proof}
 Let $r$ be as assumed, and let $\tilde{z}\in \Circ_r$ be a point for which
$L(\tilde{z})\geq L(z)$ for all $z\in\Circ_r$. Let $\tilde{w}:=L(\tilde{z})$.
Then $\vert\tilde{w}\vert=\M(L,r)$ and it follows from
the functional equation (\ref{func_eq}) and Proposition \ref{prop_pol_1} that
\begin{eqnarray*}
 \log \M(L,\vert\lambda\vert r) &=&\log\M(p\circ L,r)
=\log\M(p,L(\Circ_r))
 \geq \log p(\tilde{w})\\
&\geq& \log\left( (1-\eps)\vert a_d\vert\cdot\vert\tilde{w}\vert^d\right)
= k_{\eps} + d\cdot\log\M(L,r),
\end{eqnarray*}
and
\begin{eqnarray*}
  \log \M(L,\vert\lambda\vert r) &=& \log\M(p,L(\Circ_r))
\leq \log \M(p,\M(L,r))\\
&\leq&\log\left( (1+\eps)\vert a_d\vert\cdot\M(L,r)^d\right)= K_{\eps} +d\cdot \log\M(L,r).
\end{eqnarray*}
Hence,
\begin{align*}
 \left( \frac{k_{\eps}}{\log \M(L,r) } + d\right)
\leq\frac{\log \M(L,\vert\lambda\vert r) }{\log \M(L,r)}
\leq\left(\frac{K_{\eps}}{\log \M(L,r) } + d\right).
\end{align*}
The statement now follows immediately from the fact that
\begin{align*}
\frac{\log \M(L,\vert\lambda\vert^n r) }{\log \M(L,r)}
=\frac{\log \M(L,\vert\lambda\vert^n r) }
{\log \M(L,\vert\lambda\vert^{n-1} r)}\cdot\;\dots\;\cdot
\frac{\log \M(L,\vert\lambda\vert r) }{\log \M(L,r)}.
\end{align*}
\end{proof}

\begin{lem}
\label{lem_reg_growth_2}
 For every $k\in\N$ there exists $R_k>0$ such that
for all $R>R_k$, $m\leq d^k$ and $n>k$,
\begin{align*}
 \M(L,r_n)>r_{n+1}^m,
\end{align*}
where the sequence $(r_n)$ is defined by
\begin{align*}
r_n:=\vert\lambda\vert^n\cdot\M^n(L,R).
\end{align*}
Moreover, we can choose
$R_1=2\cdot\max\bigg\{\log \vert a_d\vert,\log \frac{2}{\vert a_d\vert},
\log\vert\lambda\vert\bigg\}$.
\end{lem}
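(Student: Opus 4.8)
The plan is to turn the claim into an inequality between pure powers and to use that the iterated maximum modulus of $L$ grows so violently that the factors $\vert\lambda\vert^{(n+1)m}$ are harmless. Write $s_n:=\M^n(L,R)$, so that $r_n=\vert\lambda\vert^ns_n$ and $s_{n+1}=\M(L,s_n)$. Fix $\eps=1/2$, choose $R_{1/2}$ so that both conclusions of Proposition~\ref{prop_pol_1} hold with this $\eps$, and require $R_k\geq\max\{R_{1/2},R_L\}$; then $(s_n)$ is increasing with $s_n\geq R>R_{1/2}$, so Proposition~\ref{prop_pol_1} is available all along the orbit. Since $z\mapsto\lambda^nz$ maps $\Circ_{s_n}$ onto $\Circ_{\vert\lambda\vert^ns_n}$, the iterated functional equation (\ref{eqn_iter_fct_eq}) gives
\[
 \M(L,r_n)=\M(L,\vert\lambda\vert^ns_n)=\M(p^n\circ L,s_n)=\M\bigl(p^n,L(\Circ_{s_n})\bigr).
\]
Choosing $\tilde w\in L(\Circ_{s_n})$ with $\vert\tilde w\vert=\M(L,s_n)=s_{n+1}>R_{1/2}$ and inserting the iterated estimate of Proposition~\ref{prop_pol_1} yields
\[
 \M(L,r_n)\geq\vert p^n(\tilde w)\vert\geq\Bigl(\tfrac{\vert a_d\vert}{2}\Bigr)^{q_n(d)}s_{n+1}^{\,d^n},\qquad q_n(d)=\tfrac{d^n-1}{d-1}.
\]
As $r_{n+1}^m=\vert\lambda\vert^{(n+1)m}s_{n+1}^m$ and, for $n>k$ and $m\leq d^k$, $d^n\geq d^{k+1}\geq d\cdot m$, so that $d^n-m\geq\tfrac{d-1}{d}d^n\geq\tfrac12 d^n$, it suffices to establish
\[
 \Bigl(\tfrac{\vert a_d\vert}{2}\Bigr)^{q_n(d)}s_{n+1}^{\,d^n-m}>\vert\lambda\vert^{(n+1)m}\qquad\text{for all }n>k,\ m\leq d^k.
\]

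Next I would control $s_{n+1}$ from below. Enlarging $R_k$ so that $\M(L,R)$ exceeds $\vert\lambda\vert R_{1/2}$ and a few further explicit constants, the functional equation together with the $n=1$ case of Proposition~\ref{prop_pol_1} (applied at the maximum-modulus point of $L$ on $\Circ_{s_j/\vert\lambda\vert}$) gives, for every $j\geq1$,
\[
 s_{j+1}=\M(L,s_j)\geq\tfrac{\vert a_d\vert}{2}\,\M(L,s_j/\vert\lambda\vert)^{d}\geq\tfrac{\vert a_d\vert}{2\vert\lambda\vert^{d}}\,s_j^{\,d}=:c\,s_j^{\,d},
\]
hence $s_{n+1}\geq c^{\,q_n(d)}s_1^{\,d^n}$ with $s_1=\M(L,R)$. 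Plugging this into the preceding display, its left-hand side is bounded below by $(\vert a_d\vert/2)^{q_n(d)}\,c^{\,q_n(d)(d^n-m)}\,s_1^{\,d^n(d^n-m)}$, whose leading factor $s_1^{\,d^n(d^n-m)}$ is doubly exponential in $n$ (the exponent is $\geq d^{2n}/2$), whereas the right-hand side satisfies $\vert\lambda\vert^{(n+1)m}\leq\vert\lambda\vert^{(n+1)d^k}$ and is only (simply) exponential in $n$. Consequently, once $R_k$ is large enough that $s_1=\M(L,R)$ dominates the bounded corrections coming from $\vert a_d\vert^{\pm1}$, $c$ and $\vert\lambda\vert$, the inequality holds for every $n>k$; since the comparison is tightest at the smallest admissible $n$ (the function $n\mapsto(n+1)/d^{n-k}$ is decreasing for $n\geq k+1$ when $d\geq2$), it is enough to check the single case $n=k+1$, which then determines $R_k$.

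For the explicit value in the case $k=1$ one carries out the above with $\eps=1/2$ fixed and everything written logarithmically, using $\log\tfrac{\vert a_d\vert}{2}\geq-\max\{\log\vert a_d\vert,\log\tfrac{2}{\vert a_d\vert}\}$ and $\log c\geq-\max\{\log\vert a_d\vert,\log\tfrac{2}{\vert a_d\vert}\}-d\log\vert\lambda\vert$, and then absorbing the finitely many thresholds that appear into $R_1=2\max\{\log\vert a_d\vert,\log\tfrac{2}{\vert a_d\vert},\log\vert\lambda\vert\}$ (together with the implicit requirement $R_1\geq\max\{R_{1/2},R_L\}$). The step I expect to be the real obstacle is precisely this quantitative bookkeeping: one must produce a lower bound for the iterated maximum modulus $\M^{n+1}(L,R)$ that is at the same time strong enough to absorb $\vert\lambda\vert^{(n+1)m}$ uniformly in $n>k$ and clean enough that all the coefficient- and multiplier-dependent constants collapse into the stated closed form.
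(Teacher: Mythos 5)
Your proposal is correct and follows essentially the same route as the paper's: both hinge on the iterated functional equation $p^n\circ L = L\circ\lambda^n$, both use Proposition~\ref{prop_pol_1} to translate the polynomial iteration into exponent $d^n$ with a controlled constant, both reduce to comparing a term that grows like $d^n\log\M^{n+1}(L,R)$ against the term $m(n+1)\log|\lambda| + m\log\M^{n+1}(L,R)$, and both identify $n=k+1$ as the critical case. The small differences are cosmetic: you invoke Proposition~\ref{prop_pol_1} directly at the maximum-modulus point, whereas the paper routes the same estimate through Lemma~\ref{lem_reg_growth_1} (applied at $r=\M^n(L,R)$); and your extra lower bound $s_{n+1}\geq c^{q_n(d)}s_1^{d^n}$, while correct, is not needed -- the paper simply uses $\log\M^{n+1}(L,R)\geq\log R$ after collecting the two $\log s_{n+1}$ terms on one side, which already makes the left side dominate exponentially. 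Your closing remark that the main difficulty is the quantitative bookkeeping is apt; the paper itself waves at ``elementary calculus'' for exactly that part, so your treatment is at the same level of rigor as the original.
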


\begin{proof}
Let $\eps\in (0, 1/2)$ be arbitrary but fixed,
and let $R>\max\{R_L, R_{\eps}\}$.
 It follows from Lemma \ref{lem_reg_growth_1} with $r=\M^n(L,R)$ that
\begin{align*}
 \log\M(L,r_n)&=\log\M(L,\vert\lambda\vert^n\M^n(L,R))\\
&\geq \prod_{i=0} ^{n-1}\!\left(d + \frac{\log k_{\eps}}
{\log \M(L,\vert\lambda\vert^i \M^n(L,R))} \right)
\!\cdot\log\M(L,\M^n(L,R))\\
&\geq \left( d - \frac{\vert\log k_{\eps}\vert}{\log R}\right)^n \cdot \log\M^{n+1}(L,R).
\end{align*}
By definition,
\begin{align*}
 \log r_{n+1}^m &= m\log(\vert\lambda\vert^{n+1}\M^{n+1}(L,R))\\
&=m(n+1)\log\vert\lambda\vert + m\log \M^{n+1}(L,R).
\end{align*}
Define $c_R:=\frac{\vert\log k_{\eps}\vert}{\log R}$. We want to show that
there exists $R_k$ such that when $R>R_k$, $m\leq d^k$ and $n\geq k+1$, then
\begin{eqnarray*}
 \log\M^{n+1}(L,R)\cdot ((d-c_R)^n - m)>m(n+1)\log\vert\lambda\vert.
\end{eqnarray*}
Obviously, it is sufficient if the wanted constant $R_k$ satisfies
\begin{align*}
 \log R_k\cdot ((d-c_{R_k})^n - d^k)>d^k(n+1)\log\vert\lambda\vert
\end{align*}
for all $n\geq k+1$, and this is certainly true when we choose $R_k$
sufficiently large. We will omit the details since they follow from elementary calculus;
however, one can prove inductively that
every $R_k$ with $\log R_k
>\max\lbrace2\vert\log k_{\eps}\vert, \frac{2k}{d} \vert\log k_{\eps}\vert,
\frac{\sqrt{\e}\log\vert\lambda\vert}{(2-\sqrt{\e})(k+2)}\rbrace$ is sufficiently large.
Hence for $k=1$ we can choose
$R_1=2\max\{\vert\log \vert a_d\vert\vert, \vert\log \frac{1}{2}\vert a_d\vert\vert,
\log\vert\lambda\vert)$
since $\vert\log k_{\eps}\vert=\vert\log ((1-\eps)\vert a_d\vert)\vert$
and $\eps\in (0,1/2)$, and
since $\frac{\sqrt{\e}}{3\cdot (2-\sqrt{\e})}<2$.
\end{proof}

\begin{lem}[growth of minimum modulus]
\label{lem_growth_min_mod}
Suppose that $\J_{z_0}(p)=\{ z_0\}$ and let $m\in\N_{>1}$.
Then there exists
$R_m>0$ with the following property:
For every $r>R_m$ there is a simple closed curve $\Gamma^r$
separating $\Circ_r$ and $\Circ_{r^m}$
% $\Gamma^r\subset\{ z\in\C: r<\vert z\vert<r^m\}$
such that
\begin{align*}
 \m(L,\Gamma^r)>\M(L,r).
\end{align*}
\end{lem}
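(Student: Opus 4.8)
The plan is to realise $\Gamma^r$ as a forward image, under the linear part $z\mapsto\lambda z$ of the dynamics, of a tiny curve around $z_0$ that lies in $\I(p)$, and to convert iteration of $p$ on the tiny curve into evaluation of $L$ on the large curve by means of the iterated functional equation~(\ref{eqn_iter_fct_eq}). Throughout I fix a sufficiently small $\eps>0$ (so that Proposition~\ref{prop_pol_1} applies with the chosen $R_{\eps}$) and write $k_{\eps}=\log((1-\eps)\abs{a_d})$, $K_{\eps}=\log((1+\eps)\abs{a_d})$ as in Lemma~\ref{lem_reg_growth_1}.

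First I would set up the curve. Since $L'(0)\neq0$, fix a neighbourhood $U$ of $0$ on which $L$ is conformal and choose $\delta>0$ with $\cl{\D_{\delta}(z_0)}\subset L(U)$. As $\J_{z_0}(p)=\{z_0\}$, Proposition~\ref{prop_curve_escapingset} supplies a simple closed curve $\gamma\subset\D_{\delta}(z_0)\cap\I(p)$ around $z_0$. Put $\widetilde\gamma:=(L|_U)^{-1}(\gamma)$, a simple closed curve around $0$, and let $\rho_1:=\dist(0,\widetilde\gamma)>0$ and $\rho_2:=\max_{z\in\widetilde\gamma}\abs{z}$. For every $j\geq0$ equation~(\ref{eqn_iter_fct_eq}) gives $L(\lambda^j\widetilde\gamma)=p^j(\gamma)$; hence $\lambda^j\widetilde\gamma$ is a simple closed curve around $0$ lying in $\{\abs{\lambda}^j\rho_1\leq\abs{z}\leq\abs{\lambda}^j\rho_2\}$, it satisfies $\m(L,\lambda^j\widetilde\gamma)=\m(p^j,\gamma)$, and it separates $\Circ_r$ from $\Circ_{r^m}$ whenever $\abs{\lambda}^j\rho_1>r$ and $\abs{\lambda}^j\rho_2<r^m$. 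The curve $\Gamma^r$ will be $\lambda^{n}\widetilde\gamma$ for a suitable $n=n(r)$.

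Two growth estimates then force $\m(L,\Gamma^r)>\M(L,r)$. A lower bound: as $\gamma\subset\I(p)$ is compact, $\m(p^j,\gamma)\to\infty$, so there is $j_0$ with $\m(p^{j_0},\gamma)>\max\{R_L,R_{\eps}\}$ and $\log\m(p^{j_0},\gamma)>2\abs{k_{\eps}}/(d-1)$; applying Proposition~\ref{prop_pol_1} to the set $p^{j_0}(\gamma)$ gives, for $n\geq j_0$,
\begin{align*}
\log\m(p^n,\gamma)\ \geq\ q_{n-j_0}(d)\,k_{\eps}+d^{n-j_0}\log\m(p^{j_0},\gamma)\ \geq\ c_1 d^{n},\qquad c_1:=\tfrac{1}{2}d^{-j_0}\log\m(p^{j_0},\gamma)>0.
\end{align*}
An upper bound (regular growth of $L$): iterating $\log\M(L,\abs{\lambda}s)\leq K_{\eps}+d\log\M(L,s)$, valid by~(\ref{func_eq}) and Proposition~\ref{prop_pol_1} once $\M(L,s)>R_{\eps}$, starting from a fixed $s_*\geq\max\{R_L,R_{\eps},1\}$ with $\M(L,s_*)>R_{\eps}$ and using $d^n=(\abs{\lambda}^n)^{\rho(L)}$, produces a constant $C_2>0$ with $\log\M(L,r)\leq C_2 r^{\rho(L)}$ for all $r\geq s_*$.

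It then remains to choose $n=n(r)$. For $r\geq s_*$ the curve $\Gamma^r:=\lambda^{n}\widetilde\gamma$ works once the integer $n$ meets the two separation conditions together with $c_1 d^{n}>C_2 r^{\rho(L)}$ (which gives $\m(L,\Gamma^r)\geq e^{c_1 d^{n}}>e^{C_2 r^{\rho(L)}}\geq\M(L,r)$), and all of these conditions read
\begin{align*}
\max\bigl\{(C_2/c_1)^{1/\rho(L)},\,\rho_1^{-1}\bigr\}\cdot r\ <\ \abs{\lambda}^{n}\ <\ \rho_2^{-1}r^{m}.
\end{align*}
Because $m\geq2$, the logarithmic length of this interval grows like $(m-1)\log r$, hence exceeds $\log\abs{\lambda}$ for all large $r$, so an integer $n$ with $\abs{\lambda}^{n}$ in the interval exists; enlarging the threshold so that this $n$ also satisfies $n\geq j_0$ yields the required $R_m$. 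The main obstacle is exactly this matching step: the maximum-principle bound $\M(L,r)\leq\M(p^n,\gamma)$ is useless, since $\M(p^n,\gamma)$ overtakes $\m(p^n,\gamma)$ double-exponentially, so one genuinely needs the regular-growth estimate $\M(L,r)\lesssim r^{\rho(L)}$, and one must keep $c_1,\rho_1,\rho_2$ fixed independently of $r$; the hypothesis $m>1$ is precisely what makes the admissible window for $\abs{\lambda}^{n}$ eventually wide enough to contain a power of $\abs{\lambda}$.
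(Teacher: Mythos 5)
Your proposal is correct and follows essentially the same route as the paper's proof: pull back a small escaping curve $\gamma$ around $z_0$ through the local inverse of $L$ to get $\widetilde\gamma$ around $0$, push forward by $\lambda^n$, and use the iterated functional equation together with Proposition~\ref{prop_pol_1} to compare $\m(L,\Gamma^r)=\m(p^n,\gamma)$ against $\M(L,r)$, with the hypothesis $m>1$ providing the slack needed to pick $n$. The only cosmetic difference is that you package the upper bound as the regular-growth inequality $\log\M(L,r)\lesssim r^{\rho(L)}$ and phrase the choice of $n$ as an interval condition on $|\lambda|^n$, whereas the paper works directly with the integers $l(r)$ and $l(t)$ tied to $r$ and $r^m$ and compares $d^{l(r)}$ with $d^{l(t)}$; these are equivalent since $r^{\rho(L)}\asymp d^{l(r)}$.
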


\begin{proof}
 Let $D$ be a disk around $0$ such that
$L\vert_D$ is conformal.
Let $\delta>0$ be sufficiently small such that $\D_{\delta}(z_0)\subset L(D)$.
By Proposition \ref{prop_curve_escapingset}
there exists a simple closed curve $\gamma_{\delta}\subset \D_{\delta}(z_0)\cap \I(p)$
 which surrounds $z_0$.
Since such a curve exists in the intersection of every arbitrarily
small neighbourhood of $z_0$ and $\I(p)$, we can assume w.l.o.g. that $D=\D$.
Let $\Gamma_{\delta}= L^{-1}(\gamma_{\delta})\cap\D$. Then $\Gamma_{\delta}$ is a simple
closed curve surrounding $0$.
Define
\begin{align*}
 s:=\min_{z\in\Gamma_{\delta}} \vert z\vert=\dist(0,\Gamma_{\delta})\quad \text{and}\quad
t:= \max_{z\in\Gamma_{\delta}} \vert z\vert.
\end{align*}
Obviously, both $s$ and $t$
are finite and positive constants.

Let $r>\left(\frac{\vert\lambda\vert\cdot t}{s}\right)^{\frac{1}{m-1}}$
be an arbitrary but fixed number. We define
$l(r)$ to be the unique integer for which
\begin{align*}
% \label{eq_l(r)}
 \vert\lambda\vert^{l(r)-1} \leq r < \vert\lambda\vert^{l(r)}.
\end{align*}
Similarly, for the external radius $t$ of the curve $\Gamma_{\delta}$ we denote by
$l(t)$ the unique natural number for which
\begin{align*}
% \label{eq_l(t)}
t\cdot \vert\lambda\vert^{l(t)} \leq r^m < t\cdot\vert\lambda\vert^{l(t) + 1}.
\end{align*}
(Note that the lower bound for $r$ implies that
$s\cdot\vert\lambda\vert^{l(t)}>r$.)
By taking logarithms we obtain the equivalent equations
\begin{align*}
l(r)-1\leq  \frac{\log r}{\log\vert\lambda\vert}<l(r)
\end{align*}
and
\begin{align*}
l(t)\leq  \frac{m\cdot\log r - \log t}{\log\vert\lambda\vert}<l(t) + 1.
\end{align*}
A combination of these two inequalities yields
\begin{align}
\label{eq_combi}
 m\cdot l(r) - \left( \frac{\log t}{\log\vert\lambda\vert} + m + 1\right)
<l(t)
< m\cdot l(r) - \frac{\log t}{\log\vert\lambda\vert}.
\end{align}

Let us fix an $\eps\in (0,1/2)$.
Let $j\in\N$ be minimal with the property that
$p^j(\gamma_{\delta})\subset \{z: \vert z\vert >R_{\eps}\}$.
Note that there is a unique integer $j$ with this property since $\gamma_{\delta}$
is a compact subset of $\I(p)$.
We define
\begin{align*}
 \Gamma^r:=\{ z\in\C: \lambda^{-l(t)}\cdot z\in \Gamma_{\delta}\}.
\end{align*}
Observe that $\Gamma^r$ separates $\Circ_r$ and $\Circ_{r^m}$.
In order to simplify the calculations,
let us consider the logarithms of the minimum and maximum modulus.
Using Proposition \ref{prop_pol_1}, these can be estimated in the following way:
\begin{eqnarray*}
\log \m(L,\Gamma^r) \!\!
&=&\!\! \log\m(p^{l(t)}\circ L, \Gamma_{\delta})
=\log \m(p^{l(t)}, \gamma_{\delta})
\geq \log\m(p^{l(t)-j}, R_{\eps})\\
&\geq& \!\!\log\{((1-\eps)\vert a_d\vert)^{q_{l(t)-j}(d)}\cdot R_{\eps}^{d^{l(t)-j}}\}\\
&=&\!\! q_{l(t)-j}(d)
\cdot \log ((1-\eps)\vert a_d\vert) + d^{l(t)-j}\cdot \log R_{\eps},\\[2ex]
\log \M(L,r)\!\!
&=&\!\!\log\M(p^{l(r)},L(\Circ_{r\cdot \vert\lambda\vert^{-l(r)}}))
\leq \log\M(p^{l(r)},R_{\eps})\\
&\leq& \!\!\log\{((1+\eps)\vert a_d\vert)^{q_{l(r)}(d)}\cdot R_{\eps}^{d^{l(r)}}\}\\
&\leq& \!\! q_{l(r)}(d)
\cdot \log ((1+\eps)\vert a_d\vert) + d^{l(r)}\cdot \log R_{\eps}.
\end{eqnarray*}
Equation (\ref{eq_combi}) yields the relation
$m\cdot l(r)-C  < l(t)< m\cdot l(r) + c$
with the constants
$C:=\log t/\log\vert\lambda\vert +m +1$
and $c:=\log t/\log\vert\lambda\vert$.
Furthermore, by Proposition \ref{prop_pol_1} we can estimate the polynomials
$q_{n+1}(d) = d^{n} + \ldots + d +1 = (d^{n+1}-1)/(d-1)$ by
$d^n\leq q_{n+1}(d)\leq d^{n+1}$. Together, we obtain
\begin{eqnarray*}
\log \m(L,\Gamma^r)
&>& d^{m\cdot l(r) - C-j-1}\cdot \log ((1-\eps)\vert a_d\vert)
+ d^{m\cdot l(r) -C - j}\cdot \log R_{\eps}\\
&=& d^{m\cdot l(r)}\cdot \frac{\log ((1-\eps)\vert a_d\vert R_{\eps}^d)}{d^{C+j+1}},\\[2ex]
 \log \M(L,r) &\leq& d^{l(r)} \cdot \log ((1+\eps)\vert a_d\vert)
+ d^{l(r)}\cdot \log R_{\eps}\\
&=& d^{l(r)}\cdot \log ((1+\eps)\vert a_d\vert R_{\eps})
\end{eqnarray*}
as new lower and upper bounds for the minimum and maximum modulus, respectively.
Since $m\geq 2$, it is sufficient to find a constant $R_m$ such that for all
$r>R_m$,
\begin{align*}
&\;& d^{2l(r)}\cdot \frac{\log ((1-\eps)\vert a_d\vert R_{\eps}^d)}{d^{C+j+1}}
&> d^{l(r)}\cdot \log ((1+\eps)\vert a_d\vert R_{\eps})\\
&\Longleftrightarrow& d^{l(r)}
&> \frac{\log ((1+\eps)\vert a_d\vert R_{\eps})}{\log ((1-\eps)\vert a_d\vert R_{\eps}^d)}
\cdot d^{C+j+1}=:l_{\eps}.
\end{align*}
Hence
$R_m:=\max\bigg\{\left(\frac{\vert\lambda\vert\cdot t}{s}\right)^{\frac{1}{m-1}},
\vert\lambda\vert^{\frac{\log l_{\eps}}{\log d}}\bigg\}$ is sufficiently large.
\end{proof}

\begin{proof}[Proof of Theorem \ref{thm_main}]
 Let us start with the case when $\J_{z_0}(p)\neq\{z_0\}$.
Assume that $A_R(L)$ is a spider's web for some sufficiently large $R$.
By definition,
there exists a sequence of bounded simply-connected domains $G_n$ such that
$G_n\subset G_{n+1}$, $\partial G_n\subset A_R(L)$ for $n\in\N$, and
$\bigcup G_n=\C$. We can assume w.l.o.g. that every $G_n$ contains $0$
(since this is true anyway for all sufficiently large $n$).

By Proposition \ref{prop_curve_escapingset}, for every $n\in\N$, the curve
$L(\partial G_n)$ intersects the filled Julia set of $p$.
Let $K>0$ be the radius of the smallest disk around $0$
which contains the (filled) Julia set of $p$. Then there
exists a sequence of points $w_n\in\partial G_n$ such that
$\vert L(w_n)\vert\leq K$. But this contradicts the assumption that
all points $z\in\partial G_n$ satisfy $\vert L(z)\vert\geq \M(L,R)$.

Let us now consider the situation when $\J_{z_0}(p)=\{z_0\}$.
By \cite[Theorem 8.1]{rs_2} it is sufficient to find
a sequence of bounded simply-connected domains $G_n$ such that
for all (sufficiently large) $n$,
\begin{align*}
 G_n\supset\{ z\in\C: \vert z\vert <\M^n(L,R)\}
\end{align*}
and
\begin{align*}
G_{n+1} \text{ is contained in a bounded component of }\C\setminus L(\partial G_n).
\end{align*}
Let $R_1$ be the constant from Lemma \ref{lem_reg_growth_2}, and
set $R:=\max \{R_L,R_1\}$.
For $n\in\N$ let $r_n:=\vert\lambda\vert^n\M^n(L,R)$
(see also Lemma \ref{lem_reg_growth_2}).
By Lemma \ref{lem_growth_min_mod},
there exists
a simple closed curve $\Gamma^{r_n}$ separating $\Circ_{r_n}$ and
$\Circ_{r_n^d}$ such that $\m(L, \Gamma^{r_n})>\M(L,r_n)$.
We define $G_n$ to be the interior of $\Gamma^{r_n}$.
Then every $G_n$ is a bounded simply-connected domain with
\begin{align*}
 G_n\supset \{ z\in\C:\vert z\vert <r_n\}\supset \{ z\in\C:\vert z\vert <\M^n(L,R)\}.
\end{align*}
Furthermore, it follows from Lemma \ref{lem_reg_growth_2} with $m=d$ that
\begin{align*}
 \m(L,\partial G_n)=\m(L,\Gamma^{r_n})>\M(L,r_n)>r_{n+1}^d
>\max_{z\in\partial G_{n+1}}\vert z\vert,
\end{align*}
hence $G_{n+1}$ is contained in a bounded component of $\C\setminus L(\partial G_n)$
and the claim follows.
\end{proof}

Note that Corollary \ref{cor_1} is an immediate consequence of
Theorem \ref{thm_main}.

\end{document}